\newcommand{\sect}[1]{\section{#1}\setcounter{equation}{0}}
\newcommand{\subsect}[1]{\subsection{#1}}
\font\mbn=msbm10 scaled \magstep1
\font\mbs=msbm7 scaled \magstep1
\font\mbss=msbm5 scaled \magstep1
\newcommand{\N}       { \mathbb{N}}
\newcommand{\Z}        {\mathbb{Z}  } 
\newcommand\Co           {{\mathbb C}}
\newtheorem{Th}{Theorem}[section]
\newtheorem{Lm}[Th]{Lemma}
\newtheorem{C}[Th]{Corollary}
\newtheorem{Proposition}[Th]{Proposition}
\newtheorem{R}[Th]{Remark}
\newtheorem{E}[Th]{Example}
\begin{document}
\title[On Homotopy Invariants of Tensor Products of Banach Algebras]{On Homotopy Invariants of Tensor Products of Banach Algebras}

\author{Alexander Brudnyi} 
\address{Department of Mathematics and Statistics\newline
\hspace*{1em} University of Calgary\newline
\hspace*{1em} Calgary, Alberta\newline
\hspace*{1em} T2N 1N4}
\email{abrudnyi@ucalgary.ca}
\keywords{Banach algebra, maximal ideal space, tensor product, left invertible element, idempotent, homotopy equivalence}
\subjclass[2010]{Primary 46H05. Secondary 46J10.}

\thanks{Research is supported in part by NSERC}

\maketitle

\begin{abstract}
We generalize results of Davie and Raeburn describing homotopy types of the group of invertible elements and of the set of idempotents of the projective tensor product of complex unital  Banach algebras. We illustrate our results by specific examples.
\end{abstract}

\sect{Formulation of Main Results}
\subsect{} We continue to study certain relations between the algebraic structure of a commutative Banach algebra and topological properties of its maximal ideal space. In this paper, in the framework of the Novodvorski--Taylor--Raeburnwe we generalize some results of Davie \cite{D} and Raeburn \cite{R} in this area and then use these generalizations to study homotopy invariants of important classes of Banach algebras including certain Wiener algebras and algebras of bounded holomorphic functions on Behrens domains. In particular, we obtain extensions of some results of \cite{Br4} on the completion problem to an invertible operator-valued function in uniform algebras and of \cite{BRS}, see Section~1.2 below.

To formulate our results we introduce some definitions and notations.

Let $\mathfrak A$ be a complex unital Banach algebra with center $Z(\mathfrak A)$. For unital subalgebras $A\subset Z(\mathfrak A)$ and $B\subset \mathfrak A$ we denote by $A\widehat\otimes_{\mathfrak A} B$ the closure of their algebraic tensor product  \penalty-10000 $A\otimes B\subset\mathfrak A$. Let $\mathfrak M(A)$ be the maximal ideal space of $A$, i.e., the set of nonzero homomorphisms $A\rightarrow\mathbb C$ equipped with the {\em Gelfand topology}. 
We consider the class of Banach algebras $\mathfrak A$ satisfying the following property. 

There exists a constant $C$ such that for all $\xi\in\mathfrak M(A)$ and every $n\in\N$ and  $a_i\in A$, $b_i\in B$, $1\le i\le n$,
\begin{equation}\label{eq1}
\left\|\sum_{k=1}^n\xi(a_k)b_k\right\|_{B}\le C\left\|\sum_{k=1}^n a_kb_k\right\|_{\mathfrak A}.
\end{equation}
Then the map $A\otimes B\rightarrow B$,
\begin{equation}\label{eq2}
\left( \sum_{k=1}^n a_kb_k  \right)\mapsto \sum_{k=1}^n\xi(a_k)b_k\in B,
\end{equation}
extends by continuity to a bounded multiplicative projection
$P_\xi:  A\widehat\otimes_{\mathfrak A}  B\rightarrow B$. 

Let $C(\mathfrak M(A),B)$ be the Banach algebra of $B$-valued continuous functions $f$ on $\mathfrak M(A)$ with norm $\|f\|:=\max_{\xi\in \mathfrak M(A)}\|f(\xi)\|_B$.
By $P: A\widehat\otimes_{\mathfrak A}  B\rightarrow C(\mathfrak M(A),B)$, $P(u)(\xi):=P_\xi (u)$, we denote the corresponding morphism of Banach algebras. (Note that if $B=\mathbb C$, then $P$ coincides with the Gelfand transform $\hat \,: A\rightarrow C(\mathfrak M(A),\mathbb C)$.) 

Further, for a complex Banach algebra $\mathcal A$ with unit $1$ we denote by $\mathcal A^{-1}$  the group of invertible elements.  Also, by $\mathcal A^{-1}_l=\{a\in\mathcal A\, :\ \exists\, b\in\mathcal A\ {\rm such\ that}\ ba=1\}$ we denote the set of left-invertible elements of $\mathcal A$ and by ${\rm id}\, \mathcal A=\{a\in \mathcal A\, :\, a^2=a\}$  the set of idempotents of $\mathcal A$. Then $\mathcal A^{-1}$ is a complex Banach Lie group,  $\mathcal A^{-1}$ and $\mathcal A^{-1}_l$ are open subsets of $\mathcal A$ and ${\rm id}\, \mathcal A$ is the discrete union of closed connected complex Banach submanifolds of $\mathcal A$.
Moreover, each connected component of $\mathcal A^{-1}_l$ is a complex Banach homogeneous space under the action of $\mathcal A_0^{-1}$ (-- the connected component of $\mathcal A^{-1}$ containing $1$) by left multiplications: $\mathcal A_0^{-1}\times  \mathcal A^{-1}_l \ni (g,a)\mapsto ga\in \mathcal A^{-1}_l$, see \cite[Prop.\,4.7]{Br4}. In turn, 
each connected component of ${\rm id}\, \mathcal A$  is a complex Banach homogeneous space under the action of $\mathcal A_0^{-1}$ by  similarity transformations: $\mathcal A^{-1}_0\times {\rm id}\, \mathcal A\ni(g,a)\mapsto gag^{-1}\in  {\rm id}\, \mathcal A$, see \cite[Cor.\,1.7]{R}. (For the definition of a complex Banach homogeneous space see, e.g., \cite[Sec.\,1]{R}.)

Note that $P$ maps topological spaces $(A\widehat\otimes_{\mathfrak A}  B)^{-1}$, $(A\widehat\otimes_{\mathfrak A}  B)_l^{-1}$ and ${\rm id}(A\widehat\otimes_{\mathfrak A}  B)$ continuously into the spaces $(C(\mathfrak M(A),B))^{-1}$,
$(C(\mathfrak M(A), B))_l^{-1}$ and ${\rm id}(C(\mathfrak M(A), B))$, respectively. 
Here
$(C(\mathfrak M(A),B))^{-1}=C(\mathfrak M(A),B^{-1})$, ${\rm id}(C(\mathfrak M(A), B))=C(\mathfrak M(A), {\rm id}\, B)$ and $C(\mathfrak M(A), B_l^{-1})=(C(\mathfrak M(A), B))_l^{-1}$ (due to Allan's theorem, see, e.g., \cite[Thm.\,9.2.7]{Ni}).

Recall that topological spaces $X$ and $Y$ are called homotopy equivalent if there exist continuous maps $f:X\rightarrow Y$ and $g:Y\rightarrow X$ such that 
$g\circ f$ and $f\circ g$ are homotopic to the identity maps ${\rm id}_X$ and ${\rm id}_Y$. Each of the maps $f$ and $g$ is called a homotopy equivalence.

Our main result reads as follows.
\begin{Th}\label{main}
Let $(X,Y)$ be one of the pairs $\bigl((A\widehat\otimes_{\mathfrak A}  B)^{-1}, (C(\mathfrak M(A),B))^{-1}\bigr)$, \penalty-10000
$\bigl((A\widehat\otimes_{\mathfrak A}  B)_l^{-1}, (C(\mathfrak M(A),B))_l^{-1}\bigr)$ or
$\bigl({\rm id}(A\widehat\otimes_{\mathfrak A}  B), {\rm id}(C(\mathfrak M(A), B))\bigr)$.
Then $P:X\rightarrow Y$ is a homotopy equivalence.
\end{Th}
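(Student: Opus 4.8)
The plan is to prove that $P\colon X\to Y$ is a \emph{weak} homotopy equivalence and then to upgrade this conclusion. Each of the spaces in question is an open subset of a complex Banach space, or (in the idempotent case) a disjoint union of complex Banach homogeneous spaces; in every case it is a metrizable absolute neighbourhood retract and hence has the homotopy type of a CW complex. By Whitehead's theorem it therefore suffices to show that $P_*\colon\pi_k(X,x_0)\to\pi_k(Y,P(x_0))$ is a bijection for every basepoint $x_0$ and every $k\ge 0$, and I would carry the three flavours (invertible, left-invertible, idempotent) through in parallel.

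The first reduction is from all $k$ to $k=0$ by a coefficient argument. For any Banach algebra $\mathcal D$ the homotopy groups $\pi_k(\mathcal D^{-1})$, $\pi_k(\mathcal D_l^{-1})$ and $\pi_k({\rm id}\,\mathcal D)$ are identified (via based maps out of $S^k$) with connected components of $(C(S^k,\mathcal D))^{-1}$, $(C(S^k,\mathcal D))_l^{-1}$ and ${\rm id}(C(S^k,\mathcal D))$, using the identities $C(S^k,\mathcal D^{-1})=(C(S^k,\mathcal D))^{-1}$ and their left-invertible and idempotent analogues (the left-invertible one again by Allan's theorem). The decisive point is that the class of algebras under consideration is stable under the passage $A\widehat\otimes_{\mathfrak A}B\rightsquigarrow A\widehat\otimes_{C(S^k,\mathfrak A)}C(S^k,B)\cong C\bigl(S^k, A\widehat\otimes_{\mathfrak A}B\bigr)$: the subalgebra $A$ and the maximal ideal space $\mathfrak M(A)$ are unchanged, and the inequality \eqref{eq1} is inherited pointwise with the same constant $C$, so that the associated projections and the morphism $P$ are compatible with the original ones. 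Consequently it is enough to prove, for every algebra in the class and all three flavours, that $P$ induces a \emph{bijection on connected components}.

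This $\pi_0$-statement is a relative Arens--Royden/Oka principle and is where the substance of the proof lies. Write $\mathcal A=A\widehat\otimes_{\mathfrak A}B$ and $\mathcal C=C(\mathfrak M(A),B)$. Since in any Banach algebra the identity component of the invertible group is the open-and-closed subgroup $E(\cdot)$ generated by $\exp$, one has $\pi_0(\mathcal A^{-1})=\mathcal A^{-1}/E(\mathcal A)$ and $\pi_0(\mathcal C^{-1})=\mathcal C^{-1}/E(\mathcal C)$; because $P(\exp a)=\exp(P(a))$, the map $P$ descends to these quotients and one must show the descended map is bijective. For surjectivity, given $f\in C(\mathfrak M(A),B^{-1})$ I would use the compactness of $\mathfrak M(A)$ to cover it by finitely many open sets on which $f$ is uniformly close to the constant values $f(\xi_j)\in B^{-1}$; the resulting near-identity transition data are organized by a partition of unity on $\mathfrak M(A)$ and lifted into $\mathcal A$ by means of the bounded projections $P_\xi$ of \eqref{eq1} together with the holomorphic functional calculus in $\mathcal A$, and a Cartan-type successive-approximation scheme, with the norm bounds coming from \eqref{eq1}, produces $u\in\mathcal A^{-1}$ with $P(u)$ in the component of $f$. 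Injectivity is obtained by applying the same localization to a path in $\mathcal C^{-1}$ joining $P(u_0)$ to $P(u_1)$, so as to factor it through $P$ modulo exponentials and conclude $u_0^{-1}u_1\in E(\mathcal A)$. The left-invertible and idempotent cases are then reduced to the invertible one through the homogeneous-space fibrations quoted above: the orbit maps $\mathcal A_0^{-1}\to X$ (left multiplication, resp.\ conjugation) and their counterparts over $\mathcal C$ fit into compatible long exact homotopy sequences, and the five lemma reduces matters to the invertible statement together with the corresponding statement for the stabilizers, which are invertible groups of related subalgebras and are treated identically.

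I expect the main obstacle to be the surjectivity half of the $\pi_0$-statement. The image $P(\mathcal A)$ is in general a \emph{proper, non-dense} subalgebra of $C(\mathfrak M(A),B)$ --- already for the disc algebra with $B=\mathbb{C}$ the map $P$ is the inclusion of the disc algebra into $C(\overline{\mathbb{D}})$ --- so one cannot simply approximate a prescribed invertible $f$ by elements of the image. The full strength of hypothesis \eqref{eq1}, namely the existence of the bounded multiplicative projections $P_\xi$ with a uniform constant, must be exploited precisely to perform the gluing with estimates strong enough that the Cartan-type iteration converges inside $\mathcal A$ rather than only in $\mathcal C$; arranging this for the non-commutative, $B$-valued transition functions, uniformly over $\mathfrak M(A)$, is the technical heart of the argument.
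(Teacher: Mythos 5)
Your overall architecture (weak homotopy equivalence plus a Whitehead/Palais upgrade, and the identification of $\pi_k$ with components of the corresponding spaces over $C(\mathbb S^k,\cdot)$) matches the paper's, and your observation that the class of algebras is stable under $B\rightsquigarrow C(\mathbb S^k,B)$ is a legitimate alternative to the paper's handling of higher homotopy groups. But the core of your argument --- a direct, from-scratch proof that $P$ induces a bijection on connected components via a partition-of-unity/Cartan gluing scheme --- has a genuine gap, and it is exactly at the point you yourself flag as ``the technical heart.'' The mechanism you describe cannot work as stated: a partition of unity on $\mathfrak M(A)$ is not realizable by elements of $A$ (the Gelfand image $\widehat A\subset C(\mathfrak M(A))$ is in general a proper, non-dense, non-self-adjoint subalgebra --- the disc algebra already defeats this), and the projections $P_\xi$ of \eqref{eq1} map $A\widehat\otimes_{\mathfrak A}B$ \emph{into} $B$, so they provide no device for lifting data from $C(\mathfrak M(A),B)$ back into $A\widehat\otimes_{\mathfrak A}B$. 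The surjectivity of $P_*$ on $\pi_0$ is precisely the Novodvorskii--Taylor--Davie Oka principle, whose known proofs go through the several-variable holomorphic functional calculus and approximation of $\mathfrak M(A)$ by joint spectra/polynomially convex sets; it is not recoverable from \eqref{eq1} by local approximation and successive corrections. The paper does not reprove this: it \emph{quotes} Davie's and Raeburn's theorems for the projective tensor product $A\widehat\otimes_\pi B$ and transfers them to $A\widehat\otimes_{\mathfrak A}B$ through the comparison morphism $S:A\widehat\otimes_\pi B\to A\widehat\otimes_{\mathfrak A}B$ of Proposition~\ref{prop2.1}, using $\widehat P=P\circ S$ to get injectivity of $S_*$ for free and the density of $A\otimes B$ (together with openness of the relevant sets, Proposition~\ref{prop2.3}, and for idempotents the Raeburn retraction $r_{\mathcal A}$ and its naturality, Lemma~\ref{lem2.7}) to get surjectivity of $S_*$. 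That transfer step is the actual new content of the theorem, and it is absent from your proposal.

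A secondary gap: your reduction of the left-invertible and idempotent cases to the invertible one via the orbit fibrations and the five lemma is not as routine as you suggest. For a non-constant $f\in C(\mathfrak M(A),{\rm id}\,B)$ or $f\in (C(\mathfrak M(A),B))_l^{-1}$ the stabilizer in $(C(\mathfrak M(A),B))_0^{-1}$ is not the invertible group of an algebra of the form $C(\mathfrak M(A),B')$ for a fixed $B'$ --- it varies with the point of $\mathfrak M(A)$ --- so the inductive hypothesis does not apply to it ``identically''; moreover the five lemma controls the group-level terms but surjectivity on $\pi_0$ of the base requires a separate argument about which components of $Y$ meet the image. The paper sidesteps this by invoking Raeburn's Theorem~3.9 (via Proposition~\ref{te1}) for the component count and Raeburn's Corollary~4.2 argument for the higher homotopy, again only for the projective tensor product, and then transferring. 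If you want to keep your fibration strategy, you would at minimum need to prove the stabilizer comparison separately and address the component-counting issue.
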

\begin{R}\label{rem1}
{\rm (a) If $\mathfrak A=A\widehat{\otimes}_\pi B$ is the projective tensor product of $A$ and $B$, then Theorem \ref{main} implies Davie's theorem \cite[Thm. 4.10]{D} asserting that the Gelfand transform induces a homotopy equivalence of $(A\widehat{\otimes}_\pi B)^{-1}$ and $C(\mathfrak M(A), B^{-1})$.\smallskip

\noindent (b) For topological spaces $X$ and $Y$ let $[X]$ denote the set of connectivity components of 
$X$ and $[X, Y]$ denote the set of homotopy classes of continuous maps from $X$ to $Y$. Theorem \ref{main} implies that $P$ induces bijections between sets $[(A\widehat\otimes_{\mathfrak A}  B)^{-1}]$, $[(A\widehat\otimes_{\mathfrak A}  B)_l^{-1}]$,  $[{\rm id}(A\widehat\otimes_{\mathfrak A}  B)]$ and $[\mathfrak M(A), B^{-1}]$, $[\mathfrak M(A),B_l^{-1}]$, $[\mathfrak M(A),{\rm id}\, B]$, respectively. For $\mathfrak A=A\widehat{\otimes}_\pi B$ the latter gives Raeburn's theorem \cite[Thm.\,4.5]{R} asserting that the Gelfand transform induces a bijection $[{\rm id}(A\widehat{\otimes}_\pi B)]\rightarrow [\mathfrak M(A), {\rm id}\, B]$.\smallskip

\noindent (c) Let $\alpha$ be a norm on $A\otimes B$ such that the completion $\mathfrak A:=A\widehat\otimes_\alpha B$ of $A\otimes B$ with respect to $\alpha$ is a Banach algebra satisfying \eqref{eq1}.
Then Theorem \ref{main} asserts that the homotopy type of each of the considered objects is independent of the choice of such $\alpha$. In particular, if $A$ is a uniform Banach algebra (i.e., $\|a^2\|_A=\|a\|_A^2$ for all $a\in A$), the result can be applied to the injective tensor product $A\widehat\otimes_{\varepsilon}B$ as well (because  $A\widehat\otimes_{\varepsilon}B$ is isometrically isomorphic to a subalgebra of $C(\mathfrak M(A),B)$).}
\end{R}
Let $\mathcal A$ be a complex unital Banach algebra. We say that $a,b\in \mathcal A_l^{-1}$ are equivalent (written $a\sim b$) if $b=ga$ for some $g\in\mathcal A^{-1}$. Similarly, $a,b\in {\rm id}\,\mathcal A$ are equivalent (written $a\sim b$) if $b=gag^{-1}$ for some $g\in\mathcal A^{-1}$. Clearly, $\sim$ are equivalence relations on $\mathcal A_l^{-1}$ and ${\rm id}\,\mathcal A$. The corresponding sets of equivalence classes will be denoted by $\{\mathcal A_l^{-1}\}$ and $\{{\rm id}\,\mathcal A\}$.

In our setting, since $P:A\widehat\otimes_{\mathfrak A}  B\rightarrow C(\mathfrak M(A),B)$ is a morphism of Banach algebras, it induces maps $\{P\}_l:\{(A\widehat\otimes_{\mathfrak A}  B)_l^{-1}\}\rightarrow \{(C(\mathfrak M(A), B))_l^{-1}\}$ and $\{P\}_{id}:\{{\rm id}(A\widehat\otimes_{\mathfrak A}  B)\}\rightarrow \{{\rm id}(C(\mathfrak M(A), B))\}$.

As a corollary of Theorem \ref{main} we obtain:
\begin{C}\label{cor1}
Maps  $\{P\}_l$ and $\{P\}_{id}$ are bijections.
\end{C}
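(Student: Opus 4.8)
The plan is to reinterpret each set of equivalence classes as the orbit space of a group action on the set of connected components, and then to deduce the bijectivity of $\{P\}_l$ and $\{P\}_{id}$ from the bijection on connected components supplied by Theorem \ref{main} together with the fact that $P$ respects these actions.

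First I would record the group-theoretic description of the equivalence classes. For any complex unital Banach algebra $\mathcal A$, left multiplication gives an action of $\mathcal A^{-1}$ on $\mathcal A_l^{-1}$, and by \cite[Prop.\,4.7]{Br4} the subgroup $\mathcal A_0^{-1}$ acts transitively on each connected component; hence the $\mathcal A_0^{-1}$-orbits are exactly the connected components, while the $\mathcal A^{-1}$-orbits are exactly the equivalence classes. Since $\mathcal A_0^{-1}$ is normal in $\mathcal A^{-1}$, the quotient group $G(\mathcal A):=\mathcal A^{-1}/\mathcal A_0^{-1}=\pi_0(\mathcal A^{-1})$ acts on the set $[\mathcal A_l^{-1}]$ of connected components, and I would check that its orbits are precisely the images of the equivalence classes, so that $\{\mathcal A_l^{-1}\}=[\mathcal A_l^{-1}]/G(\mathcal A)$. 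The identical argument, using the conjugation action and \cite[Cor.\,1.7]{R}, gives $\{{\rm id}\,\mathcal A\}=[{\rm id}\,\mathcal A]/G(\mathcal A)$.

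Next I would assemble the equivariance. Writing $\mathfrak B:=A\widehat\otimes_{\mathfrak A}B$ and $\mathcal C:=C(\mathfrak M(A),B)$, the map $P:\mathfrak B^{-1}\to\mathcal C^{-1}$ is both a continuous group homomorphism and, by the first case of Theorem \ref{main}, a homotopy equivalence; therefore the induced map $\pi_0(P):G(\mathfrak B)\to G(\mathcal C)$ is simultaneously a group homomorphism and a bijection, hence a group isomorphism. By the second and third cases of Theorem \ref{main}, $P$ also induces bijections $[P]_l:[\mathfrak B_l^{-1}]\to[\mathcal C_l^{-1}]$ and $[P]_{id}:[{\rm id}\,\mathfrak B]\to[{\rm id}\,\mathcal C]$. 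Because $P$ is multiplicative, $P(ga)=P(g)P(a)$ and $P(gag^{-1})=P(g)P(a)P(g)^{-1}$, so each of $[P]_l$ and $[P]_{id}$ is equivariant with respect to $\pi_0(P)$. An equivariant bijection over a group isomorphism descends to a bijection of orbit spaces, and by the first step these orbit spaces are exactly the sets of equivalence classes; thus $\{P\}_l$ and $\{P\}_{id}$ are bijections.

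The main obstacle is the first step: verifying cleanly that the set of equivalence classes coincides with the orbit space $[\,\cdot\,]/G(\mathcal A)$. This requires checking that left multiplication (respectively conjugation) by $\mathcal A^{-1}$ permutes connected components, that $\mathcal A_0^{-1}$ stabilizes each of them (which is exactly the transitivity in the cited homogeneous-space statements), and that two components lie in a common $G(\mathcal A)$-orbit if and only if they represent the same equivalence class. Once this identification is in place, the descent of the equivariant bijection to orbit spaces is routine.
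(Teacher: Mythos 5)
Your proof is correct and uses the same essential ingredients as the paper's: the $\pi_0$-bijections supplied by Theorem \ref{main} for all three spaces together with the transitivity of the identity component on each connected component of $\mathcal A_l^{-1}$ and ${\rm id}\,\mathcal A$. The paper carries this out by a direct element chase (lifting $g$ to an $h$ with $P(h)$ in the same component, etc.), while your orbit-space/equivariance formulation is just a cleaner packaging of the identical argument.
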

\subsect{} 
In this part we present some applications of Theorem \ref{main}. 

Let $\mathscr D$ denote the class of commutative unital complex Banach algebras whose maximal ideal spaces are inverse limits of inverse limiting systems of families of compact connected spaces with trivial second \v{C}ech cohomology groups homotopy equivalent to compact spaces of covering dimension $\le 2$. (Recall that for a normal space $X$, the covering dimension ${\rm dim}\, X \le n$ if every open cover of $X$ can be refined by an open cover whose order is $\le n+1$.) 

By $\mathscr C$ we denote the subclass of $\mathscr D$ consisting of algebras whose maximal ideal spaces are inverse limits of inverse limiting systems of families of compact contractible spaces. 

Examples of algebras in $\mathscr C$ include, e.g., the algebra $C(K)$ of continuous functions  on a compact contractible space $K$, the algebra $A(U)$ of bounded holomorphic functions continuous up to the boundary  on a convex domain $U\subset\mathbb C^n$,  the Wiener algebra $W(G)_\Sigma$ of functions on a connected compact abelian group $G$ whose Bohr--Fourier spectra are subsets of a fixed subsemigroup $\Sigma$ of the (additive) dual group containing zero and having the property that it does not contain both a nonzero element and its opposite and its analogue $C(G)_\Sigma$ consisting of continuous functions on $G$, see, e.g., \cite{BRS}.  In turn, the basic examples of algebras in $\mathscr D\setminus\mathscr C$ are the algebra $C(K)$ with $K$ being a compact space homotopy equivalent to a space of covering dimension $\le 2$ and with $H^2(K,\Z)=0$ but $H^1(K,\Z)\ne 0$, the algebra $A(K)$ -- the uniform closure in $C(K)$ of the algebra of  holomorphic functions defined on neighbourhoods of a compact polynomially convex set $K\subset\mathbb C^n$ homotopy equivalent to a space of covering dimension $\le 2$ and with $H^2(K,\Z)=0$ but $H^1(K,\Z)\ne 0$, algebras $H^\infty(R)$ of bounded holomorphic functions on Riemann surfaces $R$, where $R$ is an unbranched  covering of a bordered Riemann surface or a Behrens type planar domain constructed from such a covering, see \cite[Thm.\,1.1,\,Thm.\,1.3]{Br5}.\smallskip

The following result generalizes Theorems 1.1 and results of Section~5 of \cite{BRS}.
\begin{Th}\label{te1.4}
Suppose $A\in \mathscr C$. 
Let $X\in\bigl\{ (A\widehat\otimes_{\mathfrak A}  B)^{-1}, (A\widehat\otimes_{\mathfrak A}  B)_l^{-1}, {\rm id}(A\widehat\otimes_{\mathfrak A}  B)\bigr\}$ and $Y$ be the corresponding space from $\bigl\{B^{-1}, B_l^{-1}, {\rm id}\,B\bigr\}$. Let $\xi\in\mathfrak M(A)$. Then the map $P_\xi: X\rightarrow Y$ is a homotopy equivalence. Moreover, $P_\xi$ induces bijections between $\{(A\widehat\otimes_{\mathfrak A}  B)_l^{-1}\}$ and $\{B_l^{-1}\}$ and between $\{{\rm id}(A\widehat\otimes_{\mathfrak A}  B)\}$ and $\{{\rm id}\, B\}$.
\end{Th}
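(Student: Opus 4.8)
The plan is to factor $P_\xi$ through the morphism $P$ of Theorem~\ref{main} and a point-evaluation, thereby reducing everything to a single fact about pro-contractible maximal ideal spaces. Let $e_\xi\colon C(\mathfrak M(A),B)\to B$, $e_\xi(f):=f(\xi)$, be evaluation at $\xi$. By the defining relation $P(u)(\xi)=P_\xi(u)$ we have $P_\xi=e_\xi\circ P$ on $A\widehat\otimes_{\mathfrak A}B$, and, using the identifications $(C(\mathfrak M(A),B))^{-1}=C(\mathfrak M(A),B^{-1})$, $(C(\mathfrak M(A),B))_l^{-1}=C(\mathfrak M(A),B_l^{-1})$ and $\mathrm{id}\,(C(\mathfrak M(A),B))=C(\mathfrak M(A),\mathrm{id}\,B)$ recalled above, $e_\xi$ carries $C(\mathfrak M(A),Y)$ into the corresponding $Y\in\{B^{-1},B_l^{-1},\mathrm{id}\,B\}$. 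By Theorem~\ref{main} the factor $P\colon X\to C(\mathfrak M(A),Y)$ is a homotopy equivalence, so it suffices to prove that $e_\xi\colon C(\mathfrak M(A),Y)\to Y$ is a homotopy equivalence.

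This is the key lemma. Since $A\in\mathscr C$, write $\mathfrak M(A)=\varprojlim_\alpha K_\alpha$ with each $K_\alpha$ compact contractible and projections $p_\alpha\colon\mathfrak M(A)\to K_\alpha$. Let $\iota\colon Y\to C(\mathfrak M(A),Y)$ send $y$ to the constant function; then $e_\xi\circ\iota=\mathrm{id}_Y$, so it remains to show $\iota\circ e_\xi\simeq\mathrm{id}$. Now $Y$ is either an open subset of the Banach space $B$ (for $Y=B^{-1},B_l^{-1}$) or a closed complex Banach submanifold of $B$ (for $Y=\mathrm{id}\,B$); in all cases $Y$ is a metric ANR of the homotopy type of a CW complex, and hence, $\mathfrak M(A)$ being compact, so is $C(\mathfrak M(A),Y)$. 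By Whitehead's theorem it is enough to check that $e_\xi$ induces isomorphisms on all homotopy groups and on the set of components. By the exponential law an element of $\pi_n\bigl(C(\mathfrak M(A),Y)\bigr)$ is represented by a map out of $S^n\times\mathfrak M(A)=\varprojlim_\alpha(S^n\times K_\alpha)$, an inverse limit of compacta; since $Y$ is an ANR, $[\,\cdot\,,Y]$ is continuous with respect to such inverse limits, giving $[S^n\times\mathfrak M(A),Y]=\varinjlim_\alpha[S^n\times K_\alpha,Y]$ (and likewise for the homotopies witnessing equality of classes, and for $[\mathfrak M(A),Y]$ in the case of components). As each $K_\alpha$ is contractible, evaluation at $p_\alpha(\xi)$ identifies $[S^n\times K_\alpha,Y]$ with $[S^n,Y]=\pi_n(Y)$ compatibly with the bonding maps, whence $e_\xi$ induces $\pi_n\bigl(C(\mathfrak M(A),Y)\bigr)\cong\pi_n(Y)$ for every $n$. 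This proves the lemma and the first assertion of the theorem.

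For the statements on equivalence classes, factor $P_\xi=e_\xi\circ P$ once more. By Corollary~\ref{cor1} the maps $\{P\}_l$ and $\{P\}_{id}$ are bijections, so only the maps induced by $e_\xi$ on $\{C(\mathfrak M(A),B)_l^{-1}\}$ and $\{\mathrm{id}\,C(\mathfrak M(A),B)\}$ remain. By the homogeneous-space descriptions recalled above, each connected component of $\mathcal A_l^{-1}$ (resp. of $\mathrm{id}\,\mathcal A$) is a single orbit of the identity component $\mathcal A_0^{-1}$; hence the set of equivalence classes is the quotient of $\pi_0$ by the residual action of $\pi_0(\mathcal A^{-1})=\mathcal A^{-1}/\mathcal A_0^{-1}$, i.e. $\{\mathcal A_l^{-1}\}=\pi_0(\mathcal A_l^{-1})/\pi_0(\mathcal A^{-1})$ and similarly for idempotents. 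Applying the key lemma both to $Y\in\{B_l^{-1},\mathrm{id}\,B\}$ and to $Y=B^{-1}$, the map $e_\xi$ is a bijection on components in each case and, on invertibles, a morphism of topological groups, hence a group isomorphism on $\pi_0(\cdot^{-1})$. Since $e_\xi$ is equivariant, $e_\xi(gf)=e_\xi(g)e_\xi(f)$ and $e_\xi(geg^{-1})=e_\xi(g)\,e_\xi(e)\,e_\xi(g)^{-1}$, it descends to a bijection of the corresponding quotients, which is precisely the asserted bijection of equivalence classes.

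The heart of the matter, and its only genuine difficulty, is the evaluation lemma of the second paragraph. The delicate point is that $\mathfrak M(A)$ is merely \emph{pro-contractible} — an inverse limit of contractible compacta — and need not itself be contractible, so there is no contracting homotopy to pull back and no direct homotopy $\iota\circ e_\xi\simeq\mathrm{id}$. One is forced to argue on compact test spaces, where the inverse-limit continuity of $[\,\cdot\,,Y]$ for ANR targets lets the contractibility of the finite stages $K_\alpha$ do the work, and then to upgrade the resulting weak equivalence to a genuine homotopy equivalence via the CW homotopy type of the function spaces involved and Whitehead's theorem.
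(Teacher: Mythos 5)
Your proof is correct and follows essentially the same route as the paper: reduce to the function space $C(\mathfrak M(A),Y)$ via Theorem \ref{main} (resp. Corollary \ref{cor1}) and then exploit the pro-contractibility of $\mathfrak M(A)$ through the Eilenberg--Steenrod continuity of $[\,\cdot\,,Y]$ for ANR targets. The paper delegates this second step to the arguments of \cite{BRS}; your evaluation lemma for $e_\xi$ supplies those details (including the treatment of the higher homotopy groups) explicitly.
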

\begin{C}\label{cor1.5}
Suppose $A\in \mathscr C$.
\begin{itemize}
\item[(1)]
For every $u\in {\rm id}(A\widehat\otimes_\mathfrak A B)$ there exists  $g\in (A\widehat\otimes_\mathfrak A B)_0^{-1}$ such that $gug^{-1}=P_\xi(u)$.\smallskip
\item[(2)]
For every $u\in (A\widehat\otimes_\mathfrak A B)_l^{-1}$ there exists $g\in (A\widehat\otimes_\mathfrak A B)_0^{-1}$ such that $gu=P_\xi(u)$. 
\end{itemize}
\end{C}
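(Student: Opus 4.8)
Write $\mathcal A:=A\widehat\otimes_{\mathfrak A}B$ and regard $B$ as the closed subalgebra $1\cdot B\subset\mathcal A$, so that $P_\xi$ is a multiplicative projection with $P_\xi|_B={\rm id}_B$. The first thing I would record is that $P_\xi$ preserves the relevant subsets and fixes its own image: since $P_\xi$ is a morphism, $u\in{\rm id}\,\mathcal A$ gives $P_\xi(u)^2=P_\xi(u^2)=P_\xi(u)$, so $P_\xi(u)\in{\rm id}\,B\subset{\rm id}\,\mathcal A$, and $u\in\mathcal A_l^{-1}$ with $vu=1$ gives $P_\xi(v)P_\xi(u)=1$, so $P_\xi(u)\in B_l^{-1}\subset\mathcal A_l^{-1}$. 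In both cases $P_\xi\bigl(P_\xi(u)\bigr)=P_\xi(u)$; that is, $u$ and the element $P_\xi(u)$ (now viewed inside $\mathcal A$) have the \emph{same} image under $P_\xi$.

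The strategy is then to promote this equality of images first to ``same connected component'' and only afterwards to ``related by an element of the identity component''. For part~(1) I would apply Theorem~\ref{te1.4} to $P_\xi\colon{\rm id}\,\mathcal A\to{\rm id}\,B$, which it asserts is a homotopy equivalence. Since ${\rm id}\,\mathcal A$ and ${\rm id}\,B$ are discrete unions of closed Banach submanifolds, they are locally path connected, so $P_\xi$ induces a bijection of their sets of connected components; as $u$ and $P_\xi(u)$ have the same image, injectivity of this bijection forces them into one connected component of ${\rm id}\,\mathcal A$. By \cite[Cor.\,1.7]{R} that component is a homogeneous space under the similarity action of $\mathcal A_0^{-1}$, so the action is transitive on it and I obtain $g\in(A\widehat\otimes_{\mathfrak A}B)_0^{-1}$ with $gug^{-1}=P_\xi(u)$.

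Part~(2) would run in exact parallel, with ${\rm id}$ replaced by $(\cdot)_l^{-1}$ and similarity replaced by left multiplication: Theorem~\ref{te1.4} makes $P_\xi\colon\mathcal A_l^{-1}\to B_l^{-1}$ a homotopy equivalence of open (hence locally path connected) sets, so $u$ and $P_\xi(u)$ again land in a single component of $\mathcal A_l^{-1}$, which by \cite[Prop.\,4.7]{Br4} is homogeneous under left multiplication by $\mathcal A_0^{-1}$; transitivity yields $g\in(A\widehat\otimes_{\mathfrak A}B)_0^{-1}$ with $gu=P_\xi(u)$. I expect the only real content---and the step where the full strength of Theorem~\ref{te1.4} is needed---to be this passage to connected components: the bijection of equivalence classes from the ``Moreover'' clause alone would only produce an intertwiner in the whole group $\mathcal A^{-1}$, and it is the homotopy-equivalence conclusion, combined with the transitivity of the $\mathcal A_0^{-1}$-action recalled above, that pins $g$ down to the identity component $\mathcal A_0^{-1}$. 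The one routine verification is that ``homotopy equivalence'' gives a bijection of connected (not merely path) components, which is immediate here from local path connectedness of Banach manifolds and of open subsets of Banach spaces.
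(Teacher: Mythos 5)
Your proposal is correct and follows essentially the same route as the paper: deduce from Theorem \ref{te1.4} that $u$ and $P_\xi(u)$ lie in the same connected component of the relevant space, then invoke transitivity of the $(A\widehat\otimes_{\mathfrak A}B)_0^{-1}$-action on that component. The paper's proof is just a terser version of this; your explicit justification of the "same component" step (via $P_\xi\circ P_\xi=P_\xi$ and injectivity of the induced map on components) is the detail the paper leaves implicit.
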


Next, recall that if $X$ is a complex Banach homogeneous manifold under the action $G\times X\rightarrow X$, $(g,x)\mapsto g\bullet x$, of a complex Banach Lie group $G$, then the stabilizer $G(x)$ of a point  $x\in X$,
$G(x):=\{g\in G\, :\, g\bullet x=x\}$, is a closed complex Banach Lie subgroup of $G$ and stabilizers of different points are conjugate in $G$ by inner automorphisms, see, e.g., \cite[Prop.\,1.4]{R}.

Our next result gives a partial extension of \cite[Thm.\,4.4\,(b),\,Thm.\,4.8\,(a)]{Br5} (proved for the algebra $H^\infty(\mathbb D)$, where $\mathbb D\subset\mathbb C$ is the open unit disk).
\begin{Th}\label{teo4.1}
Suppose $A\in \mathscr D\setminus\mathscr C$. Let $\xi\in\mathfrak M(A)$. 
\begin{itemize}
\item[(1)] 
If the stabilizer  $B_0^{-1}(P_\xi(u))\subset  B_0^{-1}$ of  $P_\xi(u)\in {\rm id}\, B$ is connected, then for every $u\in {\rm id}(A\widehat\otimes_\mathfrak A B)$
there exists $g\in (A\widehat\otimes_\mathfrak A B)_0^{-1}$ such that $gug^{-1}=P_\xi(u)$. \smallskip
\item[(2)] 
If the stabilizer  $B_0^{-1}(P_\xi(u))\subset B_0^{-1}$ of  $P_\xi(u)\in B_l^{-1}$ is connected, then for every $u\in (A\widehat\otimes_\mathfrak A B)_l^{-1}$
there exists $g\in (A\widehat\otimes_\mathfrak A B)_0^{-1}$ such that $gu=P_\xi(u)$. 
\end{itemize}
\end{Th}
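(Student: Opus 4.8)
The plan is to derive both assertions from Theorem~\ref{main} and the homogeneous-space structure recalled above, by converting the desired conjugacy into a lifting problem controlled by the \v{C}ech cohomology of $\mathfrak M(A)$. Write $e:=P_\xi(u)$. In case (1) let $C_e\subset{\rm id}\,B$ be the connected component of $e$; by the homogeneous-space description $C_e=B_0^{-1}/B_0^{-1}(e)$, and the orbit map $q\colon B_0^{-1}\to C_e$, $q(h)=heh^{-1}$, is a principal bundle with fiber the stabilizer $B_0^{-1}(e)$. Since $P$ is a homotopy equivalence on idempotents, $u$ is conjugate to the constant idempotent $e$ by an element of $(A\widehat\otimes_{\mathfrak A}B)_0^{-1}$ iff $u$ and $e$ lie in one component of ${\rm id}(A\widehat\otimes_{\mathfrak A}B)$, which by Theorem~\ref{main} happens exactly when $P(u)\colon\mathfrak M(A)\to C_e$ is homotopic to the constant map $e$ in $C(\mathfrak M(A),{\rm id}\,B)$. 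So it suffices to lift $P(u)$ through $q$, i.e.\ to produce $G\in C(\mathfrak M(A),B_0^{-1})$ with $G(\cdot)\,e\,G(\cdot)^{-1}=P(u)$ that is in addition null-homotopic; then $q\circ G_t$ contracts $P(u)$ to the constant $e$ along a contraction $G_t$ of $G$.

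Next I would reduce the lifting problem to a finite stage of the inverse system presenting $\mathfrak M(A)$. Because $A\in\mathscr D$, we may write $\mathfrak M(A)=\varprojlim X_\alpha$ with each $X_\alpha$ compact connected, $H^2(X_\alpha;\cdot)=0$, and homotopy equivalent to a compact space of covering dimension $\le 2$. Since $C_e$ and $B_0^{-1}$ are ANRs and \v{C}ech cohomology is continuous with respect to inverse limits, both $P(u)$ and any auxiliary homotopy factor, up to homotopy, through some $X_\alpha$. It therefore suffices to solve the section problem for the pulled-back bundle over a single $X_\alpha$ of covering dimension $\le 2$ with $H^2(X_\alpha;\cdot)=0$.

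On such an $X_\alpha$ the section is built by the usual obstruction theory for the principal $B_0^{-1}(e)$-bundle: the successive obstructions lie in $H^{i+1}\bigl(X_\alpha;\pi_i(B_0^{-1}(e))\bigr)$ for $i\ge 0$. The terms with $i\ge 2$ vanish because $\dim X_\alpha\le 2$; the term with $i=1$ lies in $H^2\bigl(X_\alpha;\pi_1(B_0^{-1}(e))\bigr)=0$ since $H^2(X_\alpha;\cdot)=0$. The only surviving obstruction is the primary one in $H^1\bigl(X_\alpha;\pi_0(B_0^{-1}(e))\bigr)$, and this group is trivial precisely because the stabilizer $B_0^{-1}(e)$ is connected; this is where the hypothesis enters. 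Transporting the resulting identity $G\,e\,G^{-1}=P(u)$ back through the homotopy equivalence $P$ of Theorem~\ref{main}, together with transitivity of the $(A\widehat\otimes_{\mathfrak A}B)_0^{-1}$-action on components, yields the required $g\in(A\widehat\otimes_{\mathfrak A}B)_0^{-1}$ with $gug^{-1}=e$. Assertion (2) is proved in the same way after replacing ${\rm id}\,B$ and $C_e$ by $B_l^{-1}$ and the orbit of $e=P_\xi(u)$ under left multiplication by $B_0^{-1}$, whose stabilizer is again the group in the hypothesis, so the identical obstruction count applies.

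The step I expect to be the main obstacle is not the dimension count but guaranteeing that the conjugating element can be taken in the \emph{identity} component of $(A\widehat\otimes_{\mathfrak A}B)^{-1}$, equivalently that the lift $G$ can be chosen null-homotopic. Mere liftability of $P(u)$ through $q$ only exhibits $P(u)$ as globally similar to the constant $e$ in $C(\mathfrak M(A),B)$; one must then exploit the connectedness of $B_0^{-1}(e)$ and the freedom of replacing a lift $G$ by $G\cdot k$ for $B_0^{-1}(e)$-valued maps $k$ to annihilate the residual class that measures the deviation from null-homotopy, and to ensure that the transfer through $P$ lands in $(A\widehat\otimes_{\mathfrak A}B)_0^{-1}$. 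Pinning down this last point is the delicate part of the argument, and it is exactly where the connectedness assumption on the stabilizer must do its essential work.
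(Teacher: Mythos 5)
Your opening reductions coincide with the paper's: both pass through Theorem \ref{main} to replace the conjugacy statement by the assertion that $P(u)$ and the constant map $P_\xi(u)$ lie in one component of $C(\mathfrak M(A),Y)$, and both then descend to a single finite stage $Z$ of the inverse system (the paper via \cite[Ch.\,X,\,Thm.\,11.9]{ES}). After that the routes diverge. The paper does not run obstruction theory for the lifting problem through the orbit map $q\colon B_0^{-1}\to C_e$; instead it invokes \cite[Thm.\,7.1]{Br5} to reduce the whole statement to the claim that \emph{every principal bundle over $\widetilde Z$ with connected complex Banach Lie group fibre is trivial}, reduces further to simply connected fibres via \cite[Prop.\,7.2]{Br5}, and proves triviality by an induction over a finite trivializing cover, extending the transition functions step by step with Hu's extension theorem \cite[(10.5)]{Hu2} (this is also how the paper handles the fact that $\widetilde Z$ is only a compact space of covering dimension $\le 2$, not a CW complex, so that classical cellular obstruction theory is not directly available).

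The genuine gap in your argument is exactly the one you flag at the end, and your proposed repair does not close it. What your obstruction count in $H^{i+1}\bigl(X_\alpha;\pi_i(B_0^{-1}(e))\bigr)$ delivers is a lift $G$ of $P(u)$ through $q$, i.e.\ a continuous $B_0^{-1}$-valued $G$ with $GeG^{-1}=P(u)$; but the theorem needs $P(u)$ \emph{null-homotopic} in $C_e$, equivalently $[G]\in{\rm im}\bigl([X_\alpha,B_0^{-1}(e)]\to[X_\alpha,B_0^{-1}]\bigr)$. Any two lifts of $P(u)$ differ by right multiplication by a $B_0^{-1}(e)$-valued map, so their classes sweep out exactly the coset $[G]\cdot{\rm im}\bigl([X_\alpha,B_0^{-1}(e)]\to[X_\alpha,B_0^{-1}]\bigr)$; the image of $[G]$ in the quotient by that subgroup is therefore an invariant of $P(u)$ and cannot be ``annihilated'' by replacing $G$ with $G\cdot k$. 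Concretely, the surviving obstruction sits (for $\dim X_\alpha\le 2$, $H^2=0$) in $H^1\bigl(X_\alpha;\pi_1(C_e)\bigr)$ with $\pi_1(C_e)\cong\pi_1(B_0^{-1})/{\rm im}\,\pi_1(B_0^{-1}(e))$, and neither the connectedness of the stabilizer nor the hypotheses on $A\in\mathscr D$ (which permit $H^1(X_\alpha,\Z)\ne 0$) make this group vanish. So your proof as written establishes only that $P(u)$ is globally similar to the constant $e$ by \emph{some} continuous $B_0^{-1}$-valued function, not that the conjugating element can be taken in the identity component $(A\widehat\otimes_{\mathfrak A}B)_0^{-1}$; to finish you would need the additional input that the paper imports wholesale from \cite[Thm.\,7.1]{Br5}, rather than the coset-adjustment you suggest.
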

\begin{E}\label{ex4.3}
{\rm  Let $L(X)$ be the Banach algebra of bounded linear operators on a complex Banach space $X$ equipped with the operator norm. By $I_X\in L(X)$ we denote the identity operator and by $GL(X)\subset L(X)$ the set of invertible bounded linear operators on $X$. Clearly, $L(X)^{-1}:=GL(X)$.  By $GL_0(X)\subset GL(X)$ we denote the connected component of $I_X$.
Each $E\in {\rm id}\, L(X)$ determines a direct sum decomposition $X=X_0\oplus X_1$, where $X_0:={\rm ker}\,E$ and $X_1:={\rm ker}\,(I_X-E)$. It is easily seen that the stabilizer
$GL_0(X)(E)\subset GL_0(X)$ consists of all operators $B\in GL_0(X)$ such that $B(X_k)\subset X_k$, $k=1,2$. In particular, restrictions of operators in $GL_0(X)(E)$ to $X_k$ determine a monomorphism of complex Banach Lie groups $S_E: GL_0(X)(E)\rightarrow GL(X_1)\oplus GL(X_2)$. Moreover, $S_E$ is an isomorphism if $GL(X_i)$, $i=1,2$ are connected. 

Let $A\in \mathscr D\setminus\mathscr C$ be a uniform complex Banach algebra. Then  the morphism $P$ maps the injective tensor product $A\widehat{\otimes}_\varepsilon L(X)$ isometrically isomorphic onto a subalgebra of the algebra $C(\mathfrak M(A), L(X))$ of operator-valued continuous functions on $\mathfrak M(A)$. Let $\xi\in \mathfrak M(A)$. Now, Theorem \ref{teo4.1}\,(1) leads to the following statement:}\smallskip

\noindent $(A)$ Let $F\in {\rm id}(A\widehat{\otimes}_\varepsilon L(X))$ be such that
for $E:=P_\xi(F)\in {\rm id}\, L(X)$ the corresponding groups $GL(X_1)$ and $GL(X_2)$ are connected. Then  there exists $G\in (A\widehat{\otimes}_\varepsilon L(X))_0^{-1}$ such that $GFG^{-1}=E$.\smallskip

{\rm In particular, the result holds true for $X$ being one of the spaces: a finite-dimensional space, a Hilbert space, $c_0$ or $\ell^p$, $1\le p\le\infty$. Indeed, $GL(X)$ is connected if ${\rm dim}_{\Co}X<\infty$ and contractible for other spaces of the list, see, e.g., \cite{M} and references therein. Moreover, each subspace of a Hilbert space is Hilbert, and each
infinite-dimensional complemented subspace of $X$ being either $c_0$ or $\ell^p$, $1\le p\le\infty$, is isomorphic to $X$, see  \cite{Pe}, \cite{Lin}. This gives the required condition in (A). (It is worth noting that there are complex Banach spaces $X$ for which groups $GL(X)$ are not connected, see, e.g., \cite{Do}.) \smallskip

Further, each $E\in L(X)_l^{-1}$ determines 
a complemented subspace $X_1:={\rm ran}\,E\subset X$ isomorphic to $X$.
Then the stabilizer
$GL_0(X)(E)\subset GL_0(X)$ of $E$ consists of all operators $B\in GL_0(X)$ such that $B|_{X_1}=I_{X_1}$. If $X_2\subset X$ is a complemented subspace to $X_1$, then each $B\in GL_0(X)(E)$ has a form
\begin{equation}\label{equ4.1}
B=\left(
\begin{array}{ccc}
I_{X_1}&C\\
0&D
\end{array}
\right),\quad {\rm where}\quad D\in GL(X_2)\ \, {\rm and}\  \, C\in L(X_2,X_1)
\end{equation}
(here $L(X_2,X_1)$ is the Banach space of bounded linear operators $X_2\rightarrow X_1$).

\noindent Thus $GL_0(X)(E)$ is homotopy equivalent to the subgroup of $GL(X_2)\, (\cong GL(X/X_1))$ consisting of all operators $D$ such that ${\rm diag}(I_{X_1}, D)\in GL_0(X)$. In particular, this subgroup coincides with $GL(X_2)$ if the latter is connected.

Now, Theorem \ref{teo4.1}\,(2) leads to the following statement:\smallskip

\noindent $(B)$ {\em Let $F\in (A\widehat{\otimes}_\varepsilon L(X))_l^{-1}$ be such that for $E:=P_\xi(F)\in L(X)_l^{-1}$ the corresponding group $GL(X/X_1)$ is connected. Then there exists $G\in (A\widehat{\otimes}_\varepsilon L(X))_0^{-1}$ such that $F=GE$.}\smallskip

Let us identify $X$ with $X_1$  by  $E$ and regard $F$ as an element
of $A\widehat{\otimes}_\varepsilon L(X_1,X_1\oplus X_2)$ and $E$ as an operator in $L(X_1,X_1\oplus X_2)$.  Then we obtain
\[
F=
\left(
\begin{array}{ccc}
F_1\\
F_2
\end{array}
\right),\qquad
G=
\left(
\begin{array}{ccc}
G_{11}&G_{12}\\
G_{21}&G_{22}
\end{array}
\right)\quad {\rm and}\quad
E=
\left(
\begin{array}{cc}
I_{X_{1}}\\
0
\end{array}
\right),
\]
where $F_i\in A\widehat\otimes_\varepsilon L(X_1,X_i)$, $i=1,2$, and  $G_{ii}\in A\widehat\otimes_\varepsilon L(X_i)$, $i=1,2$,
$G_{ij}\in A\widehat\otimes_\varepsilon L(X_j,X_i)$, $i,j\in\{1,2\}$, $i\ne j$.
Now identity $F=GE$ implies that
\[
F_1=G_{11},\quad F_2=G_{21},
\]
that is, $F$ is extendable to an invertible element $G\in (A\widehat{\otimes}_\varepsilon L(X_1\oplus X_2))_0^{-1}$.

Thus  from (B) and the Bochner-Phillips-Allan-Markus-Sementsul theory, see, e.g., \cite[Th.\,9.2.7]{Ni},
we obtain the following partial extension of \cite[Th.\,1.4]{Br2}.\smallskip

\noindent (B$'$)  Let $U\subset\mathfrak M(A)$ be a dense subset and $\xi\in U$.
Suppose $F\in A\widehat{\otimes}_\varepsilon L(Y_1,Y_2)$, where $Y_i$, $i=1,2$, are complex Banach spaces, is such that for each 
$\lambda\in U$ there exists a left inverse $G_\lambda$ of $P_\lambda (F)\in L(Y_1,Y_2)$ satisfying 
\begin{equation}\label{bpams}
\sup_{\lambda\in U}\|G_\lambda\|<\infty.
\end{equation}
Let $Y:={\rm ker}\, G_\xi$.
 Assume that $GL(Y)$ is connected. Then {\em
there exist elements $H\in A\widehat{\otimes}_\varepsilon L(Y_1\oplus Y, Y_2)$, $G\in A\widehat{\otimes}_\varepsilon L(Y_2, Y_1\oplus Y)$ such that for all $\nu\in\mathfrak M(A)$}
\[
P_\nu(H) P_\nu(G)=I_{Y_2},\qquad P_\nu(G) P_\nu(H)=I_{Y_1\oplus Y}\qquad {\rm and}\qquad P_\nu(H)|_{Y_1}=P_\nu(F).
\]
\begin{R}\label{rem1.8}
{\rm Due to \cite[Th.\,9.2.7]{Ni} condition \eqref{bpams} implies that there exists an element $\widetilde F\in A\widehat{\otimes}_\varepsilon L(Y_2,Y_1)$ such that $P_\nu(\widetilde F)P_\nu(F)=I_{Y_1}$ for all $\nu\in\mathfrak M(A)$.

\noindent Statement (B$'$)  is obtained from (B) if we set $X_1:=Y_1$, $X_2:=Y$ and $X:=Y_2$.
 }
\end{R}

Note that group $GL(Y)$ is connected in the following cases (see, e.g.,  \cite[Cor.\,]{Br3} for the references): (1) ${\rm dim}_{\Co}Y<\infty$; (2) $Y_2$ is isomorphic to a Hilbert space or $c_0$ or one of the spaces $\ell^p$, $1\le p\le\infty$; (3) $Y_2$ is isomorphic to one of the spaces $L^p[0,1]$, $1< p<\infty$, or $C[0,1]$ and $Y_1$ is not isomorphic to $Y_2$.
}
\end{E}

\sect{Proofs of Theorem \ref{main} and Corollary \ref{cor1}}
\subsect{} This part contains some auxiliary results used in the proof of the theorem.
\begin{Proposition}\label{prop2.1}
The embedding $A\otimes B\hookrightarrow \mathfrak A$ extends to a morphism of Banach algebras $S: A\widehat\otimes_\pi B\rightarrow A\widehat\otimes_{\mathfrak A} B$.
\end{Proposition}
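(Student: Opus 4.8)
The plan is to exhibit $S$ as the canonical map induced by the universal property of the projective tensor product. Recall that $A\widehat\otimes_\pi B$ is the completion of $A\otimes B$ with respect to the projective norm $\pi(u)=\inf\sum_k\|a_k\|_A\|b_k\|_B$, where the infimum runs over all representations $u=\sum_k a_k\otimes b_k$, and that its defining universal property is this: every bounded bilinear map $A\times B\rightarrow E$ into a Banach space factors through a bounded linear map on $A\widehat\otimes_\pi B$ of the same norm. So the first step is to consider the bilinear multiplication map $m: A\times B\rightarrow \mathfrak A$, $m(a,b)=ab$ (the product being taken inside $\mathfrak A$, using that $A\subset Z(\mathfrak A)$ and $B\subset\mathfrak A$ are subalgebras of the same ambient algebra).

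Next I would check that $m$ is bounded bilinear: since the multiplication in $\mathfrak A$ is submultiplicative, $\|ab\|_{\mathfrak A}\le\|a\|_{\mathfrak A}\|b\|_{\mathfrak A}\le\|a\|_A\|b\|_B$, where the last inequality uses that the norms on the subalgebras $A$ and $B$ dominate (or agree with) the restriction of the $\mathfrak A$-norm. By the universal property, $m$ factors as $A\otimes B\rightarrow\mathfrak A$ followed by a contractive extension to a bounded linear map $A\widehat\otimes_\pi B\rightarrow\mathfrak A$ of norm $\le 1$. The image of this map is exactly the closure of $A\otimes B$ inside $\mathfrak A$, which by definition is $A\widehat\otimes_{\mathfrak A}B$; hence I obtain a bounded linear map $S: A\widehat\otimes_\pi B\rightarrow A\widehat\otimes_{\mathfrak A}B$ extending the inclusion $A\otimes B\hookrightarrow\mathfrak A$.

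It then remains to verify that $S$ is multiplicative, i.e., a morphism of Banach algebras. On the dense subalgebra $A\otimes B$ both products are given by $(a\otimes b)(a'\otimes b')=(aa')\otimes(bb')$, which $S$ sends to $aa'bb'=(ab)(a'b')$ in $\mathfrak A$; here the commutativity $a'b=ba'$ needed to rearrange the factors is exactly where the hypothesis $A\subset Z(\mathfrak A)$ enters, guaranteeing $a$ and $a'$ commute with $b$. Thus $S$ respects products on $A\otimes B$, and since multiplication is jointly continuous in both Banach algebras and $S$ is continuous, the identity $S(uv)=S(u)S(v)$ extends from the dense set $A\otimes B$ to all of $A\widehat\otimes_\pi B$ by a routine continuity argument. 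The unit is preserved because $1\otimes 1\mapsto 1$.

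The argument is essentially formal once the setup is in place; the only genuine point requiring care — and the step I expect to be the main (if modest) obstacle — is confirming that the bilinear multiplication map is bounded with respect to the individual norms $\|\cdot\|_A$ and $\|\cdot\|_B$, rather than merely the ambient $\mathfrak A$-norm, so that the universal property of $\widehat\otimes_\pi$ actually applies. This amounts to verifying that the inclusions $A\hookrightarrow\mathfrak A$ and $B\hookrightarrow\mathfrak A$ are norm-nonincreasing (or at least bounded), which is implicit in the phrase ``unital subalgebras'' and in the standing conventions of the paper; once that is granted, the factorization and multiplicativity go through cleanly.
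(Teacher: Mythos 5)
Your proof is correct and is essentially the paper's argument in different clothing: the paper simply records the inequality $\|\sum_i a_ib_i\|_{\mathfrak A}\le\sum_i\|a_i\|_{\mathfrak A}\|b_i\|_{\mathfrak A}$, hence $\|u\|_{\mathfrak A}\le\|u\|_{A\widehat\otimes_\pi B}$ on $A\otimes B$, and extends by continuity, which is exactly the content of your appeal to the universal property of $\widehat\otimes_\pi$ applied to the bounded bilinear multiplication map (plus the routine multiplicativity check that the paper leaves implicit). The point you flag as the ``main obstacle'' is moot under the paper's conventions, since the projective norm there is computed with the ambient norms $\|\cdot\|_{\mathfrak A}$ restricted to $A$ and $B$, so the contractivity of the inclusions is automatic.
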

\begin{proof}
By the definition of norms on $A\widehat\otimes_\pi B$ and $\mathfrak A$ we obtain
for every $u\in A\otimes B$
\[
\|u\|_{\mathfrak A}\le {\rm inf}\left(\sum_{i=1}^n \|a_i\|_{\mathfrak A}\cdot\|b_i\|_{\mathfrak A}\right)=:\|u\|_{A\widehat\otimes_\pi B};
\]
here infimum is taken over all presentations $ u=\sum_{i=1}^n a_i b_i$,
$a_i\in A$, $b_i\in B$, $n\in\N$. 

This gives the required statement.
\end{proof}
Let $M$ be a complex Banach submanifold of $B$. We set
\begin{equation}\label{eq3}
A^M=\{u\in A\widehat\otimes_{\mathfrak A} B\, :\, P_{\xi}(u)\in M,\, \xi\in\mathfrak M(A) \}.
\end{equation}
Then $P$ maps $A^M$ into $C(\mathfrak M(A),M)$.

Let $\widehat P_\xi: A\widehat\otimes_\pi B\rightarrow B$, $\xi\in\mathfrak M(A)$, be the multiplicative projection defined by \eqref{eq2} for $\mathfrak A=A\widehat\otimes_\pi B$
and let $\widehat P: A\widehat\otimes_\pi B \rightarrow C(\mathfrak M(A),B)$, $\widehat P(u)(\xi):=\widehat P_\xi (u)$,  be the corresponding  morphism of Banach algebras.
Clearly, 
\begin{equation}\label{eq4}
\widehat P=P\circ S.
\end{equation}

We set 
\begin{equation}\label{eq5}
\widehat A^M:=\{u\in A\widehat\otimes_\pi B\, :\, \widehat P_{\xi}(u)\in M,\, \xi\in\mathfrak M(A) \}.
\end{equation}
Then due to \eqref{eq4}
\[
\widehat A^M=S^{-1}(A^M).
\]
\begin{Proposition}\label{te1}
Suppose that an open subset $M\subset B$ is
a discrete union of complex Banach homogeneous spaces. Then the space $A^M$ is locally path connected and the map $P_*: [A^M]\rightarrow [\mathfrak M(A),M]$ induced by $P$ is a bijection.
 \end{Proposition}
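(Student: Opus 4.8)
The plan is to first show that $A^M$ is open in $A\widehat\otimes_{\mathfrak A} B$, so that local path connectedness is automatic, and then to reduce the bijectivity of $P_*$ to the corresponding assertion for the projective tensor product, which carries the real content. For openness, note that since $M$ is open in $B$ and $\mathfrak M(A)$ is compact, the set $C(\mathfrak M(A),M)$ is open in $C(\mathfrak M(A),B)$: any $f$ with values in $M$ has compact image, hence positive uniform distance $\varepsilon>0$ to $B\setminus M$, and every $g$ with $\|g-f\|<\varepsilon$ then takes values in $M$. As $P$ is continuous and $A^M=P^{-1}\bigl(C(\mathfrak M(A),M)\bigr)$, the set $A^M$ is open in the Banach algebra $A\widehat\otimes_{\mathfrak A} B$, hence locally convex and in particular locally path connected; consequently its connectivity components coincide with its path components and $P_*$ is well defined on $[A^M]$.

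For the bijection I would exploit the commuting triangle $\widehat P=P\circ S$ of \eqref{eq4} together with $\widehat A^M=S^{-1}(A^M)$. Surjectivity of $P_*$ is then immediate from surjectivity of $\widehat P_*$: given $f\in C(\mathfrak M(A),M)$, choose $\hat u\in\widehat A^M$ with $\widehat P(\hat u)$ homotopic to $f$; then $u:=S(\hat u)\in A^M$ satisfies $P(u)=\widehat P(\hat u)$ by \eqref{eq4}, so $P_*[u]=[f]$. For injectivity, suppose $u_0,u_1\in A^M$ with $P(u_0)$ and $P(u_1)$ homotopic in $C(\mathfrak M(A),M)$. Using openness of $A^M$ and density of $A\otimes B$ in $A\widehat\otimes_{\mathfrak A} B$, I would pick $v_i\in A\otimes B$ inside a ball about $u_i$ contained in $A^M$; the straight segment from $u_i$ to $v_i$ lies in $A^M$ and maps under $P$ to a homotopy, so $P(v_0)$ and $P(v_1)$ stay homotopic. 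Since $S$ restricts to the inclusion on $A\otimes B$, writing $v_i=S(\hat v_i)$ for the same algebraic tensor $\hat v_i\in A\otimes B\subset A\widehat\otimes_\pi B$ gives $\hat v_i\in\widehat A^M$ and $\widehat P(\hat v_i)=P(v_i)$. Injectivity of $\widehat P_*$ then produces a path from $\hat v_0$ to $\hat v_1$ in $\widehat A^M$; pushing it forward by $S$ yields a path from $v_0$ to $v_1$ in $A^M$, and concatenating with the two segments connects $u_0$ to $u_1$. Hence $[u_0]=[u_1]$ in $[A^M]$.

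This reduces everything to the statement that $\widehat P_*:[\widehat A^M]\rightarrow[\mathfrak M(A),M]$ is a bijection for the projective tensor product, which I expect to be the main obstacle. For the classical targets $M=B^{-1}$ and $M={\rm id}\,B$ this is precisely the content of the theorems of Davie and Raeburn; for a general discrete union of complex Banach homogeneous spaces it should follow by adapting Raeburn's argument. Concretely, I would fix a component, realize it as $B_0^{-1}/B_0^{-1}(m)$, use that the orbit map $B_0^{-1}\rightarrow M$ is a locally trivial fibration with connected total group $B_0^{-1}$, and combine a finite trivializing cover of the compact space $\mathfrak M(A)$ with a partition-of-unity patching in the spirit of the Novodvorski--Taylor--Raeburn method to realize and deform continuous maps by elements whose $\widehat P$-images lie in $M$. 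The delicate point is carrying out this patching at the level of the algebra $A\widehat\otimes_\pi B$ rather than merely in $C(\mathfrak M(A),B)$ — that is, guaranteeing that the lifts and homotopies produced actually arise from genuine algebra elements — and this is exactly where the projective cross-norm and the multiplicativity of the projections $\widehat P_\xi$ enter.
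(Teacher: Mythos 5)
Your argument is correct and follows essentially the same route as the paper: openness of $A^M$ gives local path connectedness, density of $A\otimes B$ together with the injectivity of $S|_{A\otimes B}$ reduces everything through the factorization $\widehat P_*=P_*\circ S_*$ to the projective tensor product, and the remaining content is the bijectivity of $\widehat P_*:[\widehat A^M]\rightarrow[\mathfrak M(A),M]$. The one step you leave as a sketch is precisely Theorem 3.9 of Raeburn \cite{R}, which is already stated for open subsets that are discrete unions of complex Banach homogeneous spaces, so the paper simply cites it rather than re-running the Novodvorski--Taylor--Raeburn patching.
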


\begin{proof}
The fact that $A^M$ is locally path connected is obvious (because $M$ is open). The same is true for $\widehat A^M$.
 Let $u\in A^M$. Since $A\otimes B$ is dense in $A\widehat\otimes_{\mathfrak A} B$, there exists $u'\in A^M\cap  (A\otimes B)$ which belongs to the same connected component of $A^M$ as $u$. Since $S|_{A\otimes B}$ is one-to one,  the element $S^{-1}(u')$ belongs to $\widehat A^M\cap  (A\otimes B)$.
 This shows that the map $S_*: [\widehat A_M]\rightarrow [A_M]$ induced by $S$ is surjective. 
 
Further, according to \cite[Thm.\,3.9]{R}, the map $\widehat P_*: [\widehat A^M]\rightarrow  [\mathfrak M(A),M]$ induced by $\widehat P$ is bijective. Since by \eqref{eq4} $\widehat P_*=P_*\circ S_*$,
the map $S_*$ is injective and hence is bijective. These imply that $P_*$ is bijective.
 \end{proof}
 \begin{Proposition}\label{prop2.3}
It is true that
\[
A^{B^{-1}}=(A\widehat\otimes_{\mathfrak A} B)^{-1}.
\]
\end{Proposition}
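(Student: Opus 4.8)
The plan is to prove the two inclusions separately; throughout write $\mathcal C:=A\widehat\otimes_{\mathfrak A}B$. The inclusion $\mathcal C^{-1}\subseteq A^{B^{-1}}$ is immediate: each $P_\xi$ is a unital multiplicative projection and therefore carries invertible elements to invertible ones, so if $u\in\mathcal C^{-1}$ then $P_\xi(u)\in B^{-1}$ for every $\xi\in\mathfrak M(A)$, i.e. $u\in A^{B^{-1}}$. The content of the proposition is the reverse inclusion, which I would obtain by localizing over the central subalgebra $A$.

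Since $A\subset Z(\mathfrak A)$, the subalgebra $A\,(=A\otimes 1)\subset\mathcal C$ is a closed unital subalgebra lying in the center of $\mathcal C$, with maximal ideal space $\mathfrak M(A)$. For $\xi\in\mathfrak M(A)$ let $J_\xi$ be the closed two-sided ideal of $\mathcal C$ generated by ${\rm ker}\,\xi\subset A$. The key step is the identification $J_\xi={\rm ker}\,P_\xi$, which identifies $\mathcal C/J_\xi$ with $B$ via $P_\xi$ (the induced bounded algebra homomorphism $\mathcal C/{\rm ker}\,P_\xi\to B$ is bijective, since $P_\xi(1\otimes b)=b$). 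One inclusion is clear: for $a\in{\rm ker}\,\xi$ one has $P_\xi(a)=\xi(a)=0$, and as $a$ is central and $P_\xi$ multiplicative, $P_\xi(ac)=0$ for all $c\in\mathcal C$, whence $J_\xi\subseteq{\rm ker}\,P_\xi$. For the reverse, let $j:B\to\mathcal C$, $j(b)=1\otimes b$, be the inclusion, so that $P_\xi\circ j=I_B$; on the dense subalgebra $A\otimes B$ one computes $(I_{\mathcal C}-j\circ P_\xi)\bigl(\sum_k a_kb_k\bigr)=\sum_k\bigl(a_k-\xi(a_k)\bigr)b_k\in J_\xi$. Since $I_{\mathcal C}-j\circ P_\xi$ is continuous and $J_\xi$ is closed, $(I_{\mathcal C}-j\circ P_\xi)(\mathcal C)\subseteq J_\xi$; applying this to $u\in{\rm ker}\,P_\xi$ gives $u=(I_{\mathcal C}-j\circ P_\xi)(u)\in J_\xi$, proving ${\rm ker}\,P_\xi\subseteq J_\xi$.

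With this identification in hand, take $u\in A^{B^{-1}}$, so that $P_\xi(u)\in B^{-1}$ for every $\xi\in\mathfrak M(A)$; under the isomorphism $\mathcal C/J_\xi\cong B$ this says exactly that $u+J_\xi$ is invertible in $\mathcal C/J_\xi$ for every $\xi$. Since $A$ is a closed unital central subalgebra of $\mathcal C$ with compact maximal ideal space $\mathfrak M(A)$ and $\{J_\xi\}_{\xi\in\mathfrak M(A)}$ is the associated family of ideals, Allan's local principle (see \cite[Thm.\,9.2.7]{Ni}) yields that $u$ is invertible in $\mathcal C$, giving $A^{B^{-1}}\subseteq\mathcal C^{-1}$. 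The main obstacle is precisely the identification $J_\xi={\rm ker}\,P_\xi$: the easy direction supplies only $J_\xi\subseteq{\rm ker}\,P_\xi$, and it is the reverse inclusion—established through the splitting $j\circ P_\xi$ together with the density of $A\otimes B$ and the boundedness of $P_\xi$ guaranteed by \eqref{eq1}—that makes the fiberwise invertibility detected by the projections $P_\xi$ coincide with the invertibility modulo $J_\xi$ demanded by Allan's principle.
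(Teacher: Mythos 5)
Your argument is correct, but it takes a genuinely different route from the paper's. The paper also leans on the Bochner--Phillips--Allan--Markus--Sementsul theory, but only in its one-sided form: from $P_\xi(u)\in B^{-1}$ for all $\xi$ it first extracts a left inverse $v$ of $u$, then notes that the idempotent $uv$ satisfies $P_\xi(uv)=1$ for all $\xi$, applies the same criterion again to get $w$ with $w(uv)=1$, and concludes from $1=w(uv)=w(uv)(uv)=uv$ that $u$ is two-sided invertible. You instead invoke Allan's local principle in its quotient formulation and put the work into the structural identification $J_\xi=\ker P_\xi$, i.e. $(A\widehat\otimes_{\mathfrak A}B)/J_\xi\cong B$ via $P_\xi$; your splitting argument with $I_{\mathcal C}-j\circ P_\xi$ on the dense subalgebra $A\otimes B$ is the right way to obtain $\ker P_\xi\subseteq J_\xi$ and is sound. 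What your route buys is a cleaner conceptual picture --- the local algebras of the pair $\bigl(A\widehat\otimes_{\mathfrak A}B,\,A\bigr)$ are all copies of $B$, so two-sided invertibility drops out in one application of the local principle; what the paper's route buys is that it uses only the one black box already needed elsewhere (the fiberwise left-invertibility criterion, as in Lemma~\ref{lem2.4}) and requires no identification of quotients. Two small points to watch: your argument needs the embedding $b\mapsto 1\otimes b$ of $B$ into $\mathfrak A$ to be continuous and needs $A$ to be (replaceable by) a closed central subalgebra of $A\widehat\otimes_{\mathfrak A}B$ --- both are implicit in the paper's standing setup but do not follow from \eqref{eq1} alone, which only gives the reverse bound $\|b\|_B\le C\|b\|_{\mathfrak A}$; and the paper cites \cite[Th.\,9.2.7]{Ni} for the one-sided fiberwise criterion, so if you rely on the quotient form of Allan's principle you should make sure to quote it in that form.
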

\begin{proof}
Let $u\in A^{B^{-1}}$, i.e., $P(u)\in C(\mathfrak M(A),B^{-1})$. Then due to the Bochner-Phillips-Allan-Markus-Sementsul theory, see, e.g., \cite[Th.\,9.2.7]{Ni}, $u\in (A\widehat\otimes_{\mathfrak A} B)_l^{-1}$, i.e., $vu=1$ for some 
$v\in A\widehat\otimes_{\mathfrak A} B$. Hence, $uv\in {\rm id}(A\widehat\otimes_{\mathfrak A} B)$ and since
\[
\begin{array}{l}
\displaystyle
P_\xi(uv)=P_\xi(u)P_\xi(v)=P_\xi(u)P_\xi(v)P_\xi(u) P_\xi(u)^{-1}=P_\xi(u)P_\xi(vu) P_\xi(u)^{-1}\medskip\\
\displaystyle\qquad\quad\ =P_\xi(u)P_\xi(1) P_\xi(u)^{-1}=P_\xi(u) P_\xi(u)^{-1}=1\quad {\rm for\ all}\quad \xi\in\mathfrak M(A),
\end{array}
\]
applying again \cite[Th.\,9.2.7]{Ni} we find an element $w\in A\widehat\otimes_{\mathfrak A} B$ such that $w(uv)=1$. This implies 
\[
1=w(uv)=w(uv)(uv)=uv.
\]
Thus $vu=uv=1$, i.e., $u\in (A\widehat\otimes_{\mathfrak A} B)^{-1}$. This proves that
$A^{B^{-1}}\subset(A\widehat\otimes_{\mathfrak A} B)^{-1}$. The converse implication
$(A\widehat\otimes_{\mathfrak A} B)^{-1}\subset A^{B^{-1}}$ is obvious. This completes the proof of the proposition.
\end{proof}

\begin{proof}[{\bf 2.2.} Proof of Theorem \ref{main}]
(1) First we prove the theorem for $X=(A\widehat\otimes_{\mathfrak A} B)^{-1}$ and $Y=C(\mathfrak M(A),B^{-1})\, (=(C(\mathfrak M(A),B)^{-1})$.
To this end we prove that {\em for every $n\ge 0$ the map of the homotopy groups $P_*: \pi_n((A\widehat\otimes_{\mathfrak A} B)^{-1})\rightarrow \pi_n(C(\mathfrak M(A),B^{-1}))$ induced by $P$ is a bijection}. 

Indeed, according to \cite[Thm.\,4.10]{D} for every $n\ge 0$ the  map of the homotopy groups
$\widehat P_*:\pi_n((A\widehat \otimes_{\pi} B)^{-1})\rightarrow \pi_n(C(\mathfrak M(A),B^{-1}))$ induced by $\widehat P$ is an isomorphism. Since $\widehat P_*=P_*\circ S_*$, see \eqref{eq4}, the map $S_*:\pi_n((A\widehat\otimes_{\pi} B)^{-1})\rightarrow \pi_n((A\widehat\otimes_{\mathfrak A} B)^{-1})$  induced by $S$ is an injection. Hence, to prove the required result it suffices to show that $S_*$ is a surjection. 

By definition, 
\begin{equation}\label{eq7}
\begin{array}{l}
\displaystyle
\pi_n((A\widehat\otimes_{\mathfrak A} B)^{-1})=[\mathbb S^n,(A\widehat\otimes_{\mathfrak A} B)^{-1}]=[C(\mathbb S^n,(A\widehat\otimes_{\mathfrak A} B)^{-1})],\medskip\\
\displaystyle
\pi_n((A\widehat\otimes_{\pi} B)^{-1})=[\mathbb S^n,(A\widehat\otimes_{\pi} B)^{-1}]=[C(\mathbb S^n,(A\widehat\otimes_{\pi} B)^{-1})];
\end{array}
\end{equation}
here $\mathbb S^n\subset\mathbb R^n$ is the unit sphere.

In turn, $C(\mathbb S^n,A\widehat\otimes_{\mathfrak A} B)=C(\mathbb S^n)\widehat\otimes_\varepsilon (A\widehat\otimes_{\mathfrak A} B)$. Since $(A\widehat\otimes_{\mathfrak A} B)^{-1}$ is open, the latter implies that for each $u\in C(\mathbb S^n,(A\widehat\otimes_{\mathfrak A} B)^{-1})$ there exists $u'\in C(\mathbb S^n,(A\widehat\otimes_{\mathfrak A} B)^{-1}\cap (A\otimes B))$ which belongs to the same connected component of $C(\mathbb S^n,(A\widehat\otimes_{\mathfrak A} B)^{-1})$ as $u$.  Since $S|_{A\otimes B}$ is one-to-one, there exists a map $v\in C(\mathbb S^n,A\widehat\otimes_{\pi} B)$ such that $S(v(x))=u'(x)$ for all $x\in\mathbb S^n$.
Let us show that $v\in C(\mathbb S^n,(A\widehat\otimes_{\pi} B)^{-1})$. 
To this end, we have  to check that $v(x)\in (A\widehat\otimes_{\pi} B)^{-1}$ for each $x\in\mathbb S^n$. 
In fact, $\widehat P(v(x))=P(u'(x))\in C(\mathfrak M(A),B^{-1})$ (because $u'(x)\in (A\widehat\otimes_{\mathfrak A} B)^{-1}$). Then Proposition \ref{prop2.3} for $\mathfrak A=A\widehat\otimes_\pi B$ implies that $v(x)\in (A\widehat\otimes_{\pi} B)^{-1}$.
Therefore $v\in C(\mathbb S^n,(A\widehat\otimes_{\pi} B)^{-1})$ and by the definition $S_*$ maps the connected component of $C(\mathbb S^n,(A\widehat\otimes_{\pi} B)^{-1})$ containing $v$ to the connected component of $C(\mathbb S^n,(A\widehat\otimes_{\mathfrak A} B)^{-1})$ containing $u$. Hence, $S_*:\pi_n((A\widehat\otimes_{\pi} B)^{-1})\rightarrow \pi_n((A\widehat\otimes_{\mathfrak A} B)^{-1})$ is a surjection and so a bijection.

Thus we have proved that $P_*: \pi_n((A\widehat\otimes_{\mathfrak A} B)^{-1})\rightarrow \pi_n(C(\mathfrak M(A),B^{-1}))$ is a bijection for every $n\ge 0$, i.e., $P$
induces a weak homotopy equivalence of $(A\widehat\otimes_{\mathfrak A} B)^{-1}$ and $C(\mathfrak M(A),B^{-1})$.
Then according to \cite[Thm.\,15]{P} the map $P:
(A\widehat\otimes_{\mathfrak A} B)^{-1}\rightarrow C(\mathfrak M(A),B^{-1})$ is a homotopy equivalence. 

This completes the proof of the theorem in this case.\medskip

\noindent (2) Now we  prove the theorem for the spaces $X=(A\widehat\otimes_{\mathfrak A} B)_l^{-1}$ and $Y=C(\mathfrak M(A),B_l^{-1})\, (=(C(\mathfrak M(A),B)_l^{-1})$.
To this end, first we prove the following result.\smallskip

\noindent (*) {\em The map
$P_*:[(A\widehat\otimes_{\mathfrak A} B)_l^{-1}]\rightarrow [\mathfrak M(A),B_l^{-1}]$ induced by $P$ is a bijection.}\smallskip

We require
\begin{Lm}\label{lem2.4}
It is true that
\[
A^{B_l^{-1}}=(A\widehat\otimes_{\mathfrak A} B)_l^{-1}.
\]
\end{Lm}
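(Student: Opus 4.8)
The plan is to prove the two inclusions separately; the hard content lives entirely in one of them, and it is exactly the step already isolated inside the proof of Proposition~\ref{prop2.3}.

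First I would dispose of the easy inclusion $(A\widehat\otimes_{\mathfrak A} B)_l^{-1}\subset A^{B_l^{-1}}$. If $u\in(A\widehat\otimes_{\mathfrak A} B)_l^{-1}$, say $vu=1$ with $v\in A\widehat\otimes_{\mathfrak A} B$, then applying the unital multiplicative projection $P_\xi$ gives $P_\xi(v)P_\xi(u)=P_\xi(vu)=P_\xi(1)=1$ for every $\xi\in\mathfrak M(A)$. Hence $P_\xi(u)\in B_l^{-1}$ for all $\xi$, i.e. $u\in A^{B_l^{-1}}$. This requires nothing beyond the fact that each $P_\xi$ is a morphism of unital Banach algebras.

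For the reverse inclusion $A^{B_l^{-1}}\subset(A\widehat\otimes_{\mathfrak A} B)_l^{-1}$, take $u\in A^{B_l^{-1}}$, so that $P(u)\in C(\mathfrak M(A),B_l^{-1})$. By Allan's theorem (recalled in Section~1) one has $C(\mathfrak M(A),B_l^{-1})=(C(\mathfrak M(A),B))_l^{-1}$, so $P(u)$ is left invertible in $C(\mathfrak M(A),B)$; equivalently there is a continuous selection $\xi\mapsto g(\xi)$ of left inverses of the $P_\xi(u)$, which, being continuous on the compact space $\mathfrak M(A)$, is automatically uniformly bounded. With this uniform bound in hand, the Bochner-Phillips-Allan-Markus-Sementsul theory, in the form of \cite[Th.\,9.2.7]{Ni}, yields an element $v\in A\widehat\otimes_{\mathfrak A} B$ with $vu=1$; that is, $u\in(A\widehat\otimes_{\mathfrak A} B)_l^{-1}$. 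This is essentially verbatim the first invocation of that theory in the proof of Proposition~\ref{prop2.3}, now applied under the weaker hypothesis $P(u)\in C(\mathfrak M(A),B_l^{-1})$ in place of $P(u)\in C(\mathfrak M(A),B^{-1})$.

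The only real point to check, and the expected main obstacle, is the legitimate applicability of \cite[Th.\,9.2.7]{Ni} to $\mathcal C:=A\widehat\otimes_{\mathfrak A} B$: namely that pointwise left invertibility of $u$ over the maximal ideal space of the central subalgebra $A$ lifts to genuine left invertibility of $u$ in $\mathcal C$. This is where the standing hypotheses enter — the central embedding $A\subset Z(\mathfrak A)$ makes $A$ act centrally on $\mathcal C$, condition \eqref{eq1} guarantees that the fiber projections $P_\xi\colon\mathcal C\to B$ are well defined, bounded and multiplicative, and compactness of $\mathfrak M(A)$ supplies the uniform bound on the fiberwise left inverses that the theory requires. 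Once these ingredients are in place, the two inclusions combine to give $A^{B_l^{-1}}=(A\widehat\otimes_{\mathfrak A} B)_l^{-1}$.
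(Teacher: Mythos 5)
Your proof is correct and follows the same route as the paper: the paper's proof of Lemma~\ref{lem2.4} consists of the single sentence that the result ``follows directly from the Bochner-Phillips-Allan-Markus-Sementsul theory, see, e.g., \cite[Th.\,9.2.7]{Ni},'' and your two-inclusion argument (the trivial direction via multiplicativity of $P_\xi$, the substantive direction via the uniformly bounded continuous family of fiberwise left inverses feeding into that theorem) is exactly the expansion of that citation. Nothing is missing; you have simply written out what the paper leaves implicit.
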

\begin{proof}
The result follows directly from the Bochner-Phillips-Allan-Markus-Sementsul theory, see, e.g., \cite[Th.\,9.2.7]{Ni}.
\end{proof}
Now (*) follows from Proposition \ref{te1} based on Lemma \ref{lem2.4} and \cite[Prop.\,4.7]{Br4}.

Next, using (*) we show that \smallskip

\noindent (**) {\em for every $n\ge 0$ the map $P_*: \pi_n((A\widehat\otimes_{\mathfrak A} B)_l^{-1})\rightarrow \pi_n(C(\mathfrak M(A),B_l^{-1}))$ induced by $P$ is a bijection}. \smallskip

The fact that  for every $n\ge 0$ the map  $\widehat P_*: \pi_n((A\widehat\otimes_{\pi} B)_l^{-1})\rightarrow \pi_n(C(\mathfrak M(A),B_l^{-1}))$ induced by $\widehat P$ is a bijection follows by repeating line by line the argument of \cite[Cor.\,4.2]{R} based on associativity of the projective tensor product, where instead of Corollary 4.1 of \cite{R} we use our statement (*). Then (**) can be deduced from here as in the proof of part (1) of the theorem. We leave the details to the readers.

Since due to (**) the map $P$ induces a weak homotopy equivalence of $(A\widehat\otimes_{\mathfrak A} B)_l^{-1}$ and $C(\mathfrak M(A),B_l^{-1})$, it is a homotopy equivalence of these spaces according to \cite[Thm.\,15]{P}.

The proof of the theorem in this case is complete.\medskip

\noindent (3) Finally, we prove the theorem for spaces $X={\rm id}(A\widehat\otimes_{\mathfrak A}B)$ and $Y=C(\mathfrak M(A), {\rm id}\, B)\, (={\rm id}(C(\mathfrak M(A),B))$.

We use the following result.
\begin{Lm}\label{lem2.5}
The map $\widehat P: {\rm id}(A\widehat\otimes_{\pi}B)\rightarrow C(\mathfrak M(A), {\rm id}\, B)$ is a homotopy equivalence.
\end{Lm}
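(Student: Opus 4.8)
The plan is to show that $\widehat P$ is a \emph{weak} homotopy equivalence, i.e. that it induces a bijection $\widehat P_*:\pi_n({\rm id}(A\widehat\otimes_\pi B))\to\pi_n(C(\mathfrak M(A),{\rm id}\, B))$ for every $n\ge 0$ and every choice of base idempotent, and then to upgrade this to a genuine homotopy equivalence exactly as in parts (1) and (2) of the proof of Theorem \ref{main}. Both ${\rm id}(A\widehat\otimes_\pi B)$ and $C(\mathfrak M(A),{\rm id}\, B)={\rm id}(C(\mathfrak M(A),B))$ are discrete unions of complex Banach homogeneous spaces, hence Banach manifolds having the homotopy type of CW complexes, so a weak equivalence between them is automatically a homotopy equivalence by \cite[Thm.\,15]{P}. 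Thus the whole task is the computation of the induced maps on homotopy groups.

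For $n=0$ the required bijection $\widehat P_*:[{\rm id}(A\widehat\otimes_\pi B)]\to[\mathfrak M(A),{\rm id}\, B]$ is precisely Raeburn's theorem \cite[Thm.\,4.5]{R}; here one uses that for these locally path connected spaces $\pi_0$ coincides with the set $[\,\cdot\,]$ of path components, and that the path components of $C(\mathfrak M(A),{\rm id}\, B)$ are the homotopy classes $[\mathfrak M(A),{\rm id}\, B]$. This is the base case from which everything else is bootstrapped.

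For $n\ge 1$ I would reduce to the case $n=0$ by the sphere trick, keeping $A$ as the distinguished commutative factor and enlarging the second factor. Fixing a base idempotent $e$ and computing $\pi_n({\rm id}(A\widehat\otimes_\pi B),e)$ from idempotents of the algebra of continuous $\mathbb S^n$-families, associativity of the projective tensor product gives $(A\widehat\otimes_\pi B)\widehat\otimes_\pi C(\mathbb S^n)\cong A\widehat\otimes_\pi\bigl(B\widehat\otimes_\pi C(\mathbb S^n)\bigr)$, so that $\pi_n$ for the pair $(A,B)$ is identified with a $\pi_0$-type statement for the pair $(A,\,B\widehat\otimes_\pi C(\mathbb S^n))$, to which the case $n=0$ already applies. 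This is the idempotent analogue of the computation invoked for the left-invertible case, and it can be carried out by repeating the argument of \cite[Cor.\,4.2]{R} essentially verbatim, with \cite[Thm.\,4.5]{R} in the role played there by the invertibility/left-invertibility statement.

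The main obstacle is bookkeeping rather than a new idea. Since ${\rm id}\, B$ is neither open in $B$ nor a single homogeneous space, but only a discrete union of such spaces, the homotopy-group computation must be performed relative to a fixed base idempotent, and this basepoint has to be tracked through the reassociation of the tensor factors and through the passage between the genuine mapping space $C(\mathbb S^n,\mathcal A)=C(\mathbb S^n)\widehat\otimes_\varepsilon\mathcal A$ and its projective model $C(\mathbb S^n)\widehat\otimes_\pi\mathcal A$. The delicate point is to verify that replacing the injective cross-norm by the projective one on $C(\mathbb S^n)\otimes(A\widehat\otimes_\pi B)$ does not change the set of components of the idempotent set, so that the projective regrouping above is legitimate, together with the identification ${\rm id}(C(\mathbb S^n,\mathcal A))=C(\mathbb S^n,{\rm id}\,\mathcal A)$ on both sides. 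Once these identifications are secured, the two theorems of Raeburn and the associativity reduction close the argument, and \cite[Thm.\,15]{P} converts the resulting weak equivalence into the asserted homotopy equivalence.
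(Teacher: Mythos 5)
Your proposal is correct and follows essentially the same route as the paper, which proves the lemma by repeating the argument of \cite[Cor.\,4.2]{R} — reducing $\pi_n$ to a $\pi_0$-statement via associativity of the projective tensor product — with \cite[Thm.\,4.5]{R} substituted for Corollary 4.1 of \cite{R} as the base case, and then passing from a weak to a genuine homotopy equivalence via \cite[Thm.\,15]{P}. The bookkeeping issues you flag (basepoints, the $\varepsilon$ versus $\pi$ cross-norm on $C(\mathbb S^n)\otimes\mathcal A$, and the identification ${\rm id}(C(\mathbb S^n,\mathcal A))=C(\mathbb S^n,{\rm id}\,\mathcal A)$) are exactly the details the paper leaves to the reader.
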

\begin{proof}
The proof repeats line by line the argument of  \cite[Cor.\,4.2]{R} based on associativity of the projective tensor product, where instead of Corollary 4.1 of \cite{R} one uses \cite[Thm.\,4.5]{R}. We leave the details to the readers.
\end{proof}
Since $\widehat P=\widehat P\circ S$, Lemma \ref{lem2.5} implies that for every $n\ge 0$ the map $S_*:\pi_n({\rm id}(A\widehat\otimes_{\pi}B))\rightarrow \pi_n({\rm id}(A\widehat\otimes_{\mathfrak A}B))$ induced by $S$ is an injection. Let us show that it is a surjection  as well. To this end we will use a construction from \cite[Prop.\,4.3]{R}:

Let $\mathcal A$ be a complex Banach algebra with identity $1$. Then if $a\in {\rm id}\,\mathcal A$ the spectrum $\sigma(a)$ of $a$ in $\mathcal A$ is contained in the set $\{0,1\}\subset\mathbb C$. Let 
\[
D=\{z\in\mathbb C\, :\, |z|<\mbox{$\frac{1}{4}$}\}\cup\{z\in\mathbb C\, :\, |z-1|<\mbox{$\frac 1 4$}\}.
 \]
 Let $U_{\mathcal A}$ be an open neighbourhood of ${\rm id}\,\mathcal A$ such that $a\in U_{\mathcal A}$ implies $\sigma(a)\subset D$. Let 
 $\Gamma_0=\{z\in\mathbb C\, :\, |z|=\mbox{$\frac{1}{4}$}\}$, $\Gamma_1=\{z\in\mathbb C\, :\, |z-1|=\mbox{$\frac 1 4$}\}$, and let $F$ be the holomorphic function defined on $D$ equals $0$ on $\{z\in\mathbb C\, :\, |z|<\frac{1}{4}\}$ and $1$ on $\{z\in\mathbb C\, :\, |z-1|<\frac 1 4\}$. 
 \begin{Proposition}[\cite{R}, Proposition 4.3]\label{prop2.6}
 The map $r_{\mathcal A}:U_\mathcal A\rightarrow {\rm id}\,\mathcal A$,
 \begin{equation}\label{eq2.5}
 r_\mathcal A(a):=\frac{1}{2\pi i}\int_{\Gamma_0\cup\Gamma_1}F(\xi)(\xi-a)^{-1}\,d\xi=
 \frac{1}{2\pi i}\int_{\Gamma_1}(\xi-a)^{-1}\,d\xi,\quad a\in U_\mathcal A,
 \end{equation}
 is a holomorphic retraction of $U_{\mathcal A}$ onto ${\rm id}\,\mathcal A$.
 \end{Proposition}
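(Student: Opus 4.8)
The plan is to recognize $r_{\mathcal A}$ as the Riesz--Dunford holomorphic functional calculus applied to the locally constant idempotent-valued symbol $F$. First I would check that $r_{\mathcal A}$ is well defined on $U_{\mathcal A}$. For $a\in U_{\mathcal A}$ we have $\sigma(a)\subset D$, so $\Gamma_0\cup\Gamma_1$ is a contour contained in $D\setminus\sigma(a)$ that winds once around each point of $\sigma(a)$; hence the resolvent $\xi\mapsto(\xi-a)^{-1}$ is defined and norm-continuous on $\Gamma_0\cup\Gamma_1$ and the $\mathcal A$-valued integral converges. Because $F\equiv 0$ on the disk bounded by $\Gamma_0$ and $F\equiv 1$ on the disk bounded by $\Gamma_1$, the two expressions in \eqref{eq2.5} coincide, so $r_{\mathcal A}(a)=F(a)$ in the sense of the functional calculus.

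Next I would establish that $r_{\mathcal A}$ is holomorphic. The map $a\mapsto(\xi-a)^{-1}$ is holomorphic on the open set $\{a\in\mathcal A:\xi\notin\sigma(a)\}$, with the usual resolvent expansion giving a convergent power series in small perturbations of $a$; since $\Gamma_1$ is compact and the resolvent is uniformly bounded on it for $a$ near a fixed point of $U_{\mathcal A}$, one may differentiate under the integral sign (equivalently, check weak holomorphy together with local boundedness and invoke the standard criterion that these imply holomorphy of a Banach-space-valued map). This yields holomorphy of $r_{\mathcal A}:U_{\mathcal A}\to\mathcal A$.

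The crux is to show $r_{\mathcal A}(a)\in{\rm id}\,\mathcal A$, i.e. $r_{\mathcal A}(a)^2=r_{\mathcal A}(a)$. The clean route is multiplicativity of the functional calculus together with $F^2=F$ on $D$, which gives $r_{\mathcal A}(a)^2=F(a)^2=(F\cdot F)(a)=F(a)=r_{\mathcal A}(a)$. To keep the argument self-contained I would instead run the Riesz-projection computation directly: take a slightly larger circle $\Gamma_1'$ about $1$ enclosing $\Gamma_1$, with both circles separating the part of $\sigma(a)$ near $1$ from $0$, write
\[
r_{\mathcal A}(a)^2=\frac{1}{(2\pi i)^2}\int_{\Gamma_1}\int_{\Gamma_1'}(\xi-a)^{-1}(\eta-a)^{-1}\,d\eta\,d\xi,
\]
apply the resolvent identity $(\xi-a)^{-1}(\eta-a)^{-1}=\bigl[(\xi-a)^{-1}-(\eta-a)^{-1}\bigr]/(\eta-\xi)$, and use Fubini. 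The inner scalar integrals collapse by Cauchy's formula: $\frac{1}{2\pi i}\int_{\Gamma_1'}\frac{d\eta}{\eta-\xi}=1$ for $\xi$ on $\Gamma_1$ (inside $\Gamma_1'$), while $\frac{1}{2\pi i}\int_{\Gamma_1}\frac{d\xi}{\eta-\xi}=0$ for $\eta$ on $\Gamma_1'$ (outside $\Gamma_1$), so only $\frac{1}{2\pi i}\int_{\Gamma_1}(\xi-a)^{-1}\,d\xi=r_{\mathcal A}(a)$ survives. The delicate point here is the correct nesting of $\Gamma_1$ and $\Gamma_1'$ so that the two winding numbers come out as $1$ and $0$; everything else is routine.

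Finally I would verify the retraction property. For $a\in{\rm id}\,\mathcal A$ one has $\sigma(a)\subset\{0,1\}\subset D$, so $a\in U_{\mathcal A}$, and from $a^2=a$ one checks the partial-fraction resolvent
\[
(\xi-a)^{-1}=\frac{a}{\xi-1}+\frac{1-a}{\xi}.
\]
Integrating over $\Gamma_1$, which winds once around $1$ and zero times around $0$, gives $r_{\mathcal A}(a)=a\cdot 1+(1-a)\cdot 0=a$. Thus $r_{\mathcal A}$ fixes ${\rm id}\,\mathcal A$ pointwise, and combined with holomorphy and the idempotency of its image it is a holomorphic retraction of $U_{\mathcal A}$ onto ${\rm id}\,\mathcal A$, as claimed.
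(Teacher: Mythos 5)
Your argument is correct: it is the standard Riesz--Dunford functional calculus proof (well-definedness and holomorphy of the contour integral, idempotency via the resolvent identity with nested contours $\Gamma_1\subset\Gamma_1'$, and the partial-fraction resolvent $(\xi-a)^{-1}=\frac{a}{\xi-1}+\frac{1-a}{\xi}$ to see that idempotents are fixed). The paper itself gives no proof of this proposition --- it is imported verbatim as Proposition 4.3 of Raeburn's paper \cite{R} --- and your reconstruction matches the classical argument that result rests on, so there is nothing to fault.
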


Going back to our setting we prove the following result.
\begin{Lm}\label{lem2.7}
We have 
\begin{equation}\label{eq2.6}
\begin{array}{l}
\displaystyle
U_{C(\mathfrak M(A),B)}=C(\mathfrak M(A),U_B),\quad U_{A\widehat\otimes_{\mathfrak A}B}=P^{-1}(U_{C(\mathfrak M(A),B)}),\medskip\\ 
\displaystyle U_{A\widehat\otimes_{\pi}B}=S^{-1}(U_{A\widehat\otimes_{\mathfrak A}B}).
\end{array}
\end{equation}
Moreover,
\begin{equation}\label{eq2.7}
\begin{array}{l}
\displaystyle
(r_{C(\mathfrak M(A),B)}(f))(x)=r_B(f(x)),\quad f\in U_{C(\mathfrak M(A),B)},\ x\in \mathfrak M(A);\\
\\
\displaystyle
P\circ r_{A\widehat\otimes_{\mathfrak A}B}=r_{C(\mathfrak M(A),B)}\circ P,\qquad S\circ r_{A\widehat\otimes_{\pi}B}= r_{A\widehat\otimes_{\mathfrak A}B}\circ S.
\end{array}
\end{equation}
\end{Lm}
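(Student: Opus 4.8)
The plan is to fix, for each unital Banach algebra $\mathcal A$, the canonical neighbourhood
\[
U_{\mathcal A}=\{a\in\mathcal A\, :\, \sigma(a)\subset D\},
\]
which is open by upper semicontinuity of the spectrum and contains ${\rm id}\,\mathcal A$ since idempotents have spectrum in $\{0,1\}\subset D$. With this choice every equality in \eqref{eq2.6} becomes a statement about how the spectrum transforms under $P$, under $S$, and under evaluation, so the first task is to pin down these spectral transformation rules; the identities in \eqref{eq2.7} will then follow from naturality of the contour integral \eqref{eq2.5}.

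For $C(\mathfrak M(A),B)$ I would invoke the standard identity
\[
\sigma_{C(\mathfrak M(A),B)}(f)=\bigcup_{x\in\mathfrak M(A)}\sigma_B(f(x)),
\]
valid because $\mathfrak M(A)$ is compact and inversion is continuous on $B^{-1}$. Then $\sigma(f)\subset D$ holds iff $f(x)\in U_B$ for every $x$, which is exactly the first equality $U_{C(\mathfrak M(A),B)}=C(\mathfrak M(A),U_B)$. For the remaining two equalities the crucial input is Proposition~\ref{prop2.3}: applying it to $u-\lambda\cdot 1$ shows that $u-\lambda$ is invertible in $A\widehat\otimes_{\mathfrak A}B$ precisely when $P(u)-\lambda$ lies in $C(\mathfrak M(A),B^{-1})$, whence
\[
\sigma_{A\widehat\otimes_{\mathfrak A}B}(u)=\sigma_{C(\mathfrak M(A),B)}(P(u)).
\]
In particular $\sigma(u)\subset D$ iff $\sigma(P(u))\subset D$, giving $U_{A\widehat\otimes_{\mathfrak A}B}=P^{-1}(U_{C(\mathfrak M(A),B)})$. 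The third equality is identical, using Proposition~\ref{prop2.3} for $\mathfrak A=A\widehat\otimes_\pi B$ together with $\widehat P=P\circ S$ to get $\sigma_{A\widehat\otimes_\pi B}(v)=\sigma_{A\widehat\otimes_{\mathfrak A}B}(S(v))$.

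For \eqref{eq2.7} I would exploit the naturality of \eqref{eq2.5}. The maps ${\rm ev}_x:C(\mathfrak M(A),B)\to B$, $P$, and $S$ are all bounded unital homomorphisms; a bounded linear map commutes with the Riemann integral defining $r_{\mathcal A}$, while a unital homomorphism $\phi$ satisfies $\phi((\xi-a)^{-1})=(\xi-\phi(a))^{-1}$ for $\xi$ in the resolvent set of $a$. Since $\sigma(a)\subset D$ and $\Gamma_1\cap D=\varnothing$ (every point of $\Gamma_1$ has modulus $\ge 3/4$ and lies at distance $\frac14$ from $1$), the contour $\Gamma_1$ stays in the resolvent set, so both operations apply and yield $\phi\circ r=r\circ\phi$ for each of the three maps; this is precisely \eqref{eq2.7}.

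The one genuinely non-formal step is the inclusion $P^{-1}(U_{C(\mathfrak M(A),B)})\subset U_{A\widehat\otimes_{\mathfrak A}B}$, namely that a small spectrum of $P(u)$ forces a small spectrum of $u$. A general morphism only gives $\sigma(\phi(a))\subset\sigma(a)$, which is the wrong direction; the reverse containment here rests entirely on the Bochner--Phillips--Allan--Markus--Sementsul theory encoded in Proposition~\ref{prop2.3}. Once that spectral identity is established, the rest is the routine naturality of the holomorphic functional calculus.
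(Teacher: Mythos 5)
Your proof is correct and follows essentially the same route as the paper: the spectral identity $\sigma(f)=\bigcup_{x}\sigma(f(x))$ for the first equality of \eqref{eq2.6}, Proposition \ref{prop2.3} applied to $u-\lambda\cdot 1$ to obtain $\sigma(u)=\sigma(S(u))=\sigma(\widehat P(u))$ for the other two, and naturality of the contour integral \eqref{eq2.5} under the unital morphisms $P$, $S$ and evaluation for \eqref{eq2.7}. You also correctly isolate the only non-formal point, namely that the reverse spectral inclusion comes from the Bochner--Phillips--Allan--Markus--Sementsul theory via Proposition \ref{prop2.3}, which is exactly what the paper relies on.
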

\begin{proof}
Let $f\in C(\mathfrak M(A),B)$. It is easily seen (because $\mathfrak M(A)$ is compact) that
\begin{equation}\label{eq2.9}
\sigma(f)=\bigcup_{x\in\mathfrak M(A)}\sigma(f(x)).
\end{equation}
This implies that $f\in U_{C(\mathfrak M(A),B)}$, i.e., $\sigma(f)\subset D$, if and only if $\sigma(f(x))\subset D$ for all $x\in\mathfrak M(A)$, i.e.,  $f\in C(\mathfrak M(A), U_B)$.
This proves the first identity of \eqref{eq2.6}.

Next, it follows from Proposition \ref{prop2.3} that
\[
\sigma(a)=\sigma(S(a))=\sigma(\widehat P(a))\quad {\rm for\ all}\quad a\in A\widehat\otimes_{\pi}B.
\]
These imply the second and the third identities of \eqref{eq2.6}.

Now, due to \eqref{eq2.5} and \eqref{eq2.6} for $f\in U_{C(\mathfrak M(A),B)}$,  $x\in \mathfrak M(A)$
\[
(r_{C(\mathfrak M(A),B)}(f))(x)=\frac{1}{2\pi i}\int_{\Gamma_1}(\xi-f)^{-1}(x)\,d\xi=\frac{1}{2\pi i}\int_{\Gamma_1}(\xi-f(x))^{-1}\,d\xi=:r_B(f(x)).
\]
This gives the first identity of \eqref{eq2.7}. Proofs of the remaining identities are similar as $P$ and $S$ are morphisms of Banach algebras.
\end{proof}

Using Lemma \ref{lem2.7} let us prove that $S_*:\pi_n({\rm id}(A\widehat\otimes_{\pi}B))\rightarrow \pi_n({\rm id}(A\widehat\otimes_{\mathfrak A}B))$, $n\in\mathbb Z_+$, is a surjection.

Let $f\in C(\mathbb S^n, {\rm id}(A\widehat\otimes_{\mathfrak A}B))$. Since $C(\mathbb S^n, A\widehat\otimes_{\mathfrak A} B)=C(\mathbb S^n)\widehat\otimes_{\varepsilon}(A\widehat\otimes_{\mathfrak A} B)$, there exists a map $g\in C(\mathbb S^n, U_{A\widehat\otimes_{\mathfrak A} B}\cap (A\otimes B))$ such that $tf+(1-t)g\in C(\mathbb S^n, U_{A\widehat\otimes_{\mathfrak A} B})$ for all $t\in [0,1]$. In particular, $r_{A\widehat\otimes_{\mathfrak A} B}(tf+(1-t)g)\in C(\mathbb S^n, {\rm id}(A\widehat\otimes_{\mathfrak A} B))$, $t\in [0,1]$, is a homotopy between $f$ and $r_{A\widehat\otimes_{\mathfrak A} B}(g)$. Hence, these maps determine the same element of $\pi_n({\rm id}(A\widehat\otimes_{\mathfrak A}B))$. 

Further, since $S|_{A\otimes B}$ is one-to-one, there exists a map $g'\in C(\mathbb S^n, A\widehat \otimes_\pi B)$ such that $S(g')=g$. Then according to the third identity of equation \eqref{eq2.6}, $g'\in C(\mathbb S^n, U_{A\widehat \otimes_\pi B})$. In turn, the third identity of equation \eqref{eq2.7} implies that
\[
 S(r_{A\widehat\otimes_{\pi}B}(g'))= (r_{A\widehat\otimes_{\mathfrak A}B}\circ S)(g')=r_{A\widehat\otimes_{\mathfrak A} B}(g).
\]
The latter and the definition of $g$ show that $S_*$ maps the homotopy class
of $r_{A\widehat\otimes_{\pi}B}(g')$ in $\pi_n({\rm id}(A\widehat\otimes_\pi B))$ to
the homotopy class of $f$ in $\pi_n({\rm id}(A\widehat\otimes_\mathfrak A B))$.
This proves surjectivity of $S_*$ and, hence, its bijectivity.

In turn, this and Lemma \ref{lem2.5} imply that $P: {\rm id}(A\widehat\otimes_{\mathfrak A}B)\rightarrow C(\mathfrak M(A), {\rm id}\, B)$ is a weak homotopy equivalence and, hence, a homotopy equivalence due to \cite[Thm.\,15]{P}.

The proof of the theorem is complete.
\end{proof}

\begin{proof}[{\bf 2.3.} Proof of Corollary \ref{cor1}]
First, let us prove the required result for the map $\{P\}_l:\{(A\widehat\otimes_{\mathfrak A}  B)_l^{-1},\}\rightarrow \{(C(\mathfrak M(A), B))_l^{-1}\}$. 

If $f\in C(\mathfrak M(A), B))_l^{-1}$, then  by Theorem \ref{main} there is $u\in (A\widehat\otimes_{\mathfrak A}  B)_l^{-1}$ such that $P(u)$ belongs to the same connected component as $f$. Since each connected component of $(C(\mathfrak M(A), B))_l^{-1}$ is a complex Banach homogeneous space with respect to the action of $(C(\mathfrak M(A), B^{-1}))_0$, see \cite[Porp.\,4.7]{Br4}, there exists $g\in (C(\mathfrak M(A), B^{-1}))_0$ such that $gP(u)=f$. Thus $\{P\}_l$ maps the equivalence class of $u$ to the equivalence class of $f$; hence, $\{P\}_l$ is a surjection. 

Further, suppose that  $u_1,u_2\in (A\widehat\otimes_{\mathfrak A}  B)_l^{-1}$ are such that
$P(u_2)=gP(u_1)$ for some $g\in C(\mathfrak M(A), B^{-1})$. According to Theorem \ref{main}, there exists $h\in (A\widehat\otimes_{\mathfrak A}  B)^{-1}$ such that $P(h)$ belongs to the same connected component as $g$. Since $P(h^{-1}u_2)=P(h^{-1})P(u_2)=
(P(h^{-1})g)P(u_1)$, $P(h^{-1}u_2)$ and $P(u_1)$ belong to the same connected component. Hence, due to Theorem \ref{main} $h^{-1}u_2$ and $u_1$ are in the same connected  component as well. Thus by \cite[Porp.\,4.7]{Br4} there exists $h'\in ((A\widehat\otimes_{\mathfrak A}  B)^{-1})_0$ such that $h'(h^{-1}u_2)=u_1$, i.e., $u_2\sim u_1$. This shows that $\{P\}_l$ is an injection.

The proof of the corollary for the map $\{P\}_{id}:\{{\rm id}(A\widehat\otimes_{\mathfrak A}  B)\}\rightarrow \{{\rm id}(C(\mathfrak M(A), B))\}$ is similar, cf. \cite[Cor.\,4.7]{R}.
\end{proof}

\sect{Proofs}
All proofs presented in this section follow (with minor modifications) the proofs of some known results. Thus we only sketch them referring the readers to the corresponding papers. For basic results of homotopy and fibre bundles theories  used in the proofs, see, e.g., \cite{Hu1}, \cite{Hus}.
\begin{proof}[Proof of Theorem \ref{te1.4}]
Since $A\in \mathscr C$,  every continuous map of $\mathfrak M(A)$ into an absolute neighbourhood retract $Y$ is homotopic to a constant map into $Y$, see the argument of the proof of \cite[Thm.\,1.2]{BRS}. In particular, this is valid for continuous maps of $\mathfrak M(A)$ into $Y\in\bigl\{  B^{-1},  B_l^{-1}, {\rm id}\, B\bigr\}$. Using this and repeating literally the arguments of the proofs of \cite[Thm.\,5.1,\,Thm.\,5.3]{BRS} one obtains the required results of Theorem \ref{main}.
\end{proof}
\begin{proof}[Proof of Corollary \ref{cor1.5}]
Let $X\in\bigl\{ (A\widehat\otimes_{\mathfrak A}  B)_l^{-1}, {\rm id}(A\widehat\otimes_{\mathfrak A}  B)\bigr\}$ and $u\in X$.  According to Theorem \ref{te1.4} $u$ belongs to the same connected component of $X$ as $P_\xi(u)$. This gives the required result because each connected component of $X$ is a complex Banach homogeneous space under the corresponding action of the group $(A\widehat\otimes_\mathfrak A B)_0^{-1}$.
\end{proof}
\begin{proof}[Proof of Theorem \ref{teo4.1}]
According to Theorem \ref{main} it suffices to prove the result for spaces $C(\mathfrak M(A),Y)$, where $Y\in\{B_l^{-1}, {\rm id}\, B\}$, i.e., to show that under the hypothesis of the theorem $u\in C(\mathfrak M(A),Y)$ and the constant map
$u(\xi)$ belong to the same connected component of $C(\mathfrak M(A),Y)$.
Since $C(\mathfrak M(A),B)=C(\mathfrak M(A))\widehat\otimes_\varepsilon B$, where $\mathfrak M(A)$ is the inverse limit of an inverse limiting system of compact connected spaces  (with trivial second \v{C}ech cohomology groups homotopy equivalent to compact spaces of covering dimension $\le 2$) and $Y$ is an absolute neighbourhood retract, each  $u\in C(\mathfrak M(A),Y)$ is homotopy equivalent to a map $u' $ which is the pullback to $\mathfrak M(A)$ of a map $\tilde u\in C(Z,Y)$  for some space $Z$ of the inverse limiting system, see, e.g., 
\cite[Ch.\,X,\,Thm.\,11.9]{ES}. Thus $u$ and $u'$ belong to the same connected component of the Banach homogeneous space $C(\mathfrak M(A),Y)$ and, moreover, the stabilizers in $B_0^{-1}$ of points $u(\lambda)$, $u'(\lambda)$ and $\tilde u(z)$, $\lambda\in\mathfrak M(A)$, $z\in Z$,  are isomorphic and, hence, due to the hypothesis are connected, see, e.g., \cite[Prop.\,1.4]{R}. This shows that it suffices to prove the theorem for maps $\tilde u\in C(Z,Y)$.

Next, as in the proof of \cite[Thm.\,4.4\,(b),\,Thm.\,4.8\,(a)]{Br5} the result will be proved if we will show that each principal bundle on $Z$ whose fibre is a {\em connected} complex Banach Lie group is trivial, see \cite[Thm.\,7.1]{Br5}. Since $Z$ is homotopy equivalent to a compact connected space $\widetilde Z$ such that 
${\rm dim}\,\widetilde Z\le 2$ and $H^2(\widetilde Z,\Z)=0$, it suffices to prove the claim for principal bundles over $\widetilde Z$. In turn, following the argument of \cite[Prop.\,7.2]{Br5} one reduces the problem to bundles with simply connected fibres.
Thus we have to prove that {\em if $\pi: E\rightarrow \widetilde Z$ is a principal bundle whose fibre is a simply connected complex Banach Lie group $G$, then $E$ is trivial}, i.e., admits a continuous map (section) $s:\widetilde Z\rightarrow E$
such that $\pi\circ s={\rm id}_{\widetilde Z}$.

To this end, suppose that $(V_i)_{1\le i\le k}$ is a finite cover of $\widetilde Z$ by compact subsets such that $E$ is trivial over each $V_i$ (existence of the cover follows from local triviality of a principal bundle). We set $V^i=\cup_{j=1}^i V_j$ and prove by induction on $i$ that $E$ is trivial over each $V^i$. 

For $i=1$ the statement is trivial. Assuming that it is valid for all $j<i$ let us prove it for $i$. 

Indeed, since $E$ is trivial over $V^{i-1}$ and $V_i$ it has continuous 
sections $s^{i-1}: V^{i-1}\rightarrow E$ and $s_i: V_i\rightarrow E$. Then
there is $g_{i}\in C(V^{i-1}\cap V_i, G)$ such that 
\begin{equation}\label{eq3.1}
s^{i-1}=g_i\cdot s_i\quad {\rm on}\quad V^{i-1}\cap V_i.
\end{equation}
(Here $G\times E\rightarrow E$, $(g,e)\mapsto g\cdot e$, is the continuous action of $G$ on fibres of $E$.)

\noindent Without loss of generality we may assume that $V^{i-1}\cap V_i\ne\emptyset$ (for otherwise the statement is obvious).  Since $G$ is simply connected, the fundamental group $\pi_1(G)$ is trivial and so the \v{C}ech cohomology group $H^1(V^{i-1}\cap V_i,\pi_1(G))=0$.  Then since ${\rm dim}\,\widetilde Z\le 2$, \cite[(10.5)]{Hu2} implies that $g_i$ admits an extension $\tilde g_i\in C(V_i,G)$. We set $\tilde s_i:=\tilde g_i\cdot s_i$. Then $\tilde s_i$ is a continuous section of $E$ over $V_i$ and due to \eqref{eq3.1}
$s^{i-1}=\tilde s_i$ on $V^{i-1}\cap V_i$. Thus these sections glue together to determine a continuous section of $E$ over $V^i:=V^{i-1}\cup V_i$. Hence, $E$ is trivial over $V^i$. This completes the proof of the induction step.

Taking here $i=k$ we obtain that $E$ is trivial on $\widetilde Z$.
As it was explained above, the latter gives the required result.
\end{proof}

\end{document}